\documentclass{amsart}
\usepackage[utf8]{inputenc}
\usepackage[dvips]{graphicx}
\usepackage{amssymb}
\usepackage{latexsym}
\usepackage{xypic}
\usepackage{multirow}
\usepackage{multicol}

\usepackage{array}
\usepackage{epstopdf}
\usepackage{ragged2e}
\usepackage{graphicx}
\usepackage{newclude}
\usepackage{relsize}		
\usepackage{hyperref}
\usepackage{xcolor}

\usepackage{enumerate}
\usepackage{float} 			


\usepackage{verbatim}

\newtheorem{theorem}{Theorem}[section]

\theoremstyle{remark}

\newtheorem{definition}[theorem]{Definition}


\newcommand{\nin}{\not\in}

\newcommand{\set}[1]{\left\{#1\right\}}
\newcommand{\SET}[2]{\left\{#1\,\middle|\,#2\right\}}
\newcommand{\parentesis}[1]{\left(#1\right)}
\newcommand{\corchetes}[1]{\left[#1\right]}
\newcommand{\valorabs}[1]{\left|#1\right|}

\newcommand{\prodint}[1]{\left\langle #1 \right\rangle}

\newcommand{\R}{\mathbb{R}}
\newcommand{\C}{\mathbb{C}}

\newcommand{\Z}{\mathbb{Z}}

\newcommand{\Q}{\mathbb{Q}}
\newcommand{\Ss}{\mathbb{S}}

\newcommand{\id}{\mathit{id}}

\newcommand{\PSL}{\text{PSL}\parentesis{3,\C}}

\newcommand{\fix}{\text{Fix}}
\newcommand{\kernel}{\text{Ker}}
\newcommand{\core}{\text{Core}}
\newcommand{\rank}{\text{rank}}

\newcommand{\Ell}{\text{Ell}}



\newtheorem{thm}{Theorem}[section]
\newtheorem{lem}[thm]{Lemma}

\newtheorem{prop}[thm]{Proposition}
\newtheorem{defn}[thm]{Definition}

\numberwithin{equation}{section}

\title[Representations of Solvable Subgroups of $\PSL$]{\sc{Representations of Solvable Subgroups of $\PSL$}} 

\author{Mauricio Toledo-Acosta}
\address{Universidad de Sonora, Departamento de Matemáticas, Boulevard Luis Encinas y Rosales s/n Col. Centro, CP 83000, Hermosillo, Mexico.}
\email{mauricio.toledo@unison.mx}
\address{Universidad Virtual del Estado de Guanajuato, Hermenegildo Bustos 129 A Sur, Centro, C.P. 36400, Purísima del Rincón, Mexico.}
\email{getoledo@uveg.edu.mx}

\thanks{Partially supported by grants of projects PAPPIT UNAM IN101816, PAPPIT UNAM IN110219, CONACYT 282937, FORDECYT 265667} 

\subjclass{2010 MSC Primary 37F99, Secondary 30F40, 20H10, 22E40}

\begin{document}


\begin{abstract} 
In this paper, we give a complete description of the representations of all upper triangular complex Kleinian subgroups of $\PSL$. In \cite{toledo2021dynamics} we show that any solvable group is virtually triangularizable and can be constructed as the semidirect product of two layers of parabolic elements and two layers of loxodromic elements. There are five families of purely parabolic discrete groups of $\PSL$ \cite{barrera2022}, therefore, the parabolic part of any upper triangular group belongs to one of these five families. In this paper we study which loxodromic elements can be combined with the elements of the parabolic part of an upper triangular discrete subgroup of $\PSL$ in each of these five cases. These parabolic elements impose strong restrictions on the type, and number, of loxodromic elements that can be present in the group. We show that up to conjugation, there are 16 types of upper triangular complex Kleinian groups in $\PSL$ containing loxodromic elements. These results are a further step towards the completion of the study of elementary discrete subgroups of $\PSL$. 
\end{abstract}

\maketitle

\section{Introduction}

Kleinian groups are discrete subgroups of $\text{PSL}\left(2,\mathbb{C}\right)$, the group of biholomorphic automorphisms of the complex projective line $\mathbb{CP}^1$, acting properly and disconti-nuously on a non-empty region of $\mathbb{CP}^1$. Kleinian groups have been thoroughly studied since the end of the 19th century by Lazarus Fuchs, Felix Klein, Henri Poincar\'e, and many others. Kleinian groups have played a major role in several fields of mathematics, such as Riemann surfaces and Teichm\"uller theory, automorphic forms, holomorphic dynamics, conformal and hyperbolic geometry, etc. For a detailed study of Kleinian groups, see \cite{maskit}, \cite{taniguchi}. In \cite{seade2001actions}, Jos\'e Seade and Alberto Verjovsky introduced the notion of complex Kleinian groups, which are discrete subgroups of $\text{PSL}\left(n+1,\mathbb{C}\right)$ acting properly and discontinuously on an open invariant subset of the complex projective space $\mathbb{CP}^n$. In the last decade, there has been a great effort to complete the study of the dynamics of complex Kleinian groups in dimension 2 (see, for example \cite{bcn11cuatrolineas,bcn14unalinea,bcn16,bgn18,ckg_libro,cs2014}). In this regard, one of the remaining pieces is the full description of the discrete subgroups of $\text{PSL}\left(3,\mathbb{C}\right)$ which have \emph{simple} dynamics. These subgroups of $\text{PSL}\left(3,\mathbb{C}\right)$ are the analogous of elementary subgroups of $\text{PSL}\left(2,\mathbb{C}\right)$. However, there is no \emph{standard} definition of elementary subgroups of $\text{PSL}\left(3,\mathbb{C}\right)$.

In the context of Kleinian groups, \emph{elementary groups} are discrete subgroups of $\text{PSL}\left(2,\mathbb{C}\right)$ such that the limit set is a finite set, in which case the limit set has 0, 1, or 2 points. These groups have \emph{simple} dynamics. In complex dimension 2, the Kulkarni limit set is either a finite union of complex lines (1, 2 or 3) or it contains an infinite number of complex lines. On the other hand, the limit set either contains a finite number of lines in general position (1, 2, 3 or 4) or it contains infinitely many lines in general position \cite{bcn16}. Therefore, one could define elementary groups in complex dimension 2 as discrete subgroups of $\text{PSL}\left(3,\mathbb{C}\right)$ such that the Kulkarni limit set contains a finite number of lines, or discrete subgroups of $\text{PSL}\left(3,\mathbb{C}\right)$ such that the Kulkarni limit set contains a finite number of lines in general position. However, this definition would not be useful in complex dimension greater than 2 since there is no similar classification of the Kulkarni limit set in this case. 

Another alternative to characterize elementary complex Kleinian groups is solvability. This approach was first explored in \cite{toledo2021dynamics}, and it is continued in this paper. In the first part \cite{toledo2021dynamics}, we studied the dynamics of solvable subgroups of $\PSL$. We proved that solvable groups are virtually triangularizable and, up to a finite-index subgroup, can be decomposed in four layers, via the semidirect product of four types of elements.  We also showed that solvable groups act properly and discontinuously on the complement of, either, a line, two lines, a line and a point outside of the line or a pencil of lines passing through a point. 

In this second part, we provide a full description of the representations of solvable subgroups of $\PSL$, up to a finite index subgroup. We build upon the work done in \cite{barrera2022}, in this paper the authors prove that, up to conjugation, there are five families of purely parabolic discrete groups in $\PSL$ (see Theorem \ref{thm_descripcion_parte_parabolica}). These purely parabolic groups play the role of the two innermost layers of discrete upper triangular subgroups of $U_+$ (see Theorem \ref{thm_descomposicion_caso_noconmutativo}), they form a \emph{parabolic core} for the group. In this paper, we describe the loxodromic elements which can be combined with every possible \emph{parabolic core} in order to generate upper triangular discrete groups. As we will evidence, the more parameters the parabolic part of a group has, the fewer loxodromic elements can be added to the group while preserving its discreteness.

We show that up to conjugation, there are 16 types of upper triangular complex Kleinian groups in $\PSL$ containing loxodromic elements. These results are a further step towards the completion of the study of elementary discrete subgroups of $\PSL$. 

Before stating the main theorem of this paper, it is necessary to introduce some notation. We denote by $\text{PSL}\left(3,\mathbb{C}\right)$ the group of biholomorphic automorphisms of the complex projective plane $\mathbb{CP}^2$. We denote by $U_+\subset\PSL$ the upper triangular subgroups of $\PSL$. We list the following families of purely parabolic discrete subgroups of $\PSL$, 

\begin{enumerate}[(1)]
\item Torus groups,
	$$\mathcal{T}(W) = \SET{\corchetes{\begin{array}{ccc}
1 & 0 & x\\
0 & 1 & y\\
0 & 0 & 1\\
\end{array}}}{(x,y)\in W}$$
where $W\subset\C^2$ is a additive subgroup.
		 
\item Dual torus groups,
	$$\mathcal{T}^\ast(W)=\SET{\corchetes{\begin{array}{ccc}
1 & x & y\\
0 & 1 & 0\\
0 & 0 & 1\\
\end{array}}}{(x,y)\in W}$$
where $W\subset\C^2$ is a discrete additive subgroup with $r(W)\leq 2$.
\end{enumerate}	

The names given to these families, and the families defined in Theorem \ref{thm_descripcion_parte_parabolica}, were taken from \cite{barrera2022}. They stem from the fact that they are subgroups of fundamental groups of such complex surfaces. For a more detailed description of these families see \cite{barrera2022,kodaira1964structure}. 

For the sake of clarity, we divide the main theorem of this paper into four parts. These four parts describe, up to conjugation, all upper triangular, non-cyclic, discrete groups containing loxodromic elements. 

\begin{thm}\label{thm:main_1}
Let $\tilde{\Gamma}\subset U_+$ be a non-cyclic, discrete abelian group containing loxodromic elements, then there exists a finite index subgroup $\Gamma\subset\tilde{\Gamma}$ conjugate to one of the following groups.

\begin{enumerate}

\item 
    $$\SET{\text{Diag}(\alpha^n,\beta^m,1)}{n,m\in\Z},$$
    where $\alpha,\beta\in \C^\ast$, $|\alpha|\neq 1$ or $|\beta|\neq 1$.
    
\item 
    $$\SET{\corchetes{\begin{array}{ccc}
    \mu(w) & w\mu(w) & 0\\
    0 & \mu(w) & 0\\
    0 & 0 & \mu(w)^{-2}\\
    \end{array}}}{w\in W}$$
    where $W\subset\C$ is a discrete additive group such that $r(W)\leq 3$ and $\mu:(W,+)\rightarrow(\C^\ast,\cdot)$ is a group morphism satisfying that for every sequence of distinct elements $\set{w_k}\subset W$ it holds that $\mu(w_k)\rightarrow 0$ or $\mu(w_k)\rightarrow \infty$.
    
\end{enumerate}

\end{thm}

\begin{thm}\label{thm:main_2}
Let $\tilde{\Gamma}\subset U_+$ be a non-cyclic, discrete non-abelian group containing loxodromic elements. If the parabolic part of $\tilde{\Gamma}$ is conjugate to a torus group, then there exists a finite index subgroup $\Gamma\subset\tilde{\Gamma}$ conjugate to the group

    $$\mathcal{T}(W)
    \rtimes
    \SET{
    \corchetes{
    \begin{array}{ccc}
    \alpha^{2n}\beta^n & 0 & 0\\
    0 & \alpha^n\beta^{2n} & 0\\
    0 & 0 & 1\\
    \end{array}
    }
    }{n\in\Z},
    $$
    where $r(W)=3$ and $\alpha,\beta\in\C^\ast$, $\alpha\neq 1$, $\alpha\beta^2\nin\R$.

\end{thm}

\begin{thm}\label{thm:main_3}
Let $\tilde{\Gamma}\subset U_+$ be a non-cyclic, discrete non-abelian group containing loxodromic elements. If the parabolic part of $\tilde{\Gamma}$ is conjugate to a dual torus group $\mathcal{T}^\ast(W)$ such that $r(W)=2$, then there exists a finite index subgroup $\Gamma\subset\tilde{\Gamma}$ conjugate to one of the following groups.

\begin{enumerate}

\item 
    $$\mathcal{T}^\ast(W)\rtimes 
    \prodint{
    \corchetes{
    \begin{array}{ccc}
    p & \gamma_{12} & \gamma_{13}\\
	0 & 1 & \frac{q}{p}\\
	0 & 0 & 1\\
	\end{array}}
    },$$
    where $W\cong \prodint{(1,0),(0,1)}$, $p\in\Z\setminus\set{-1,0,1}$, $q\in\Z\setminus\set{0}$ and $\gamma_{12},\gamma_{13}\in\C$.

\item 
    $$\mathcal{T}^\ast(W)\rtimes 
    \prodint{
    \corchetes{
    \begin{array}{ccc}
    p_1 & \gamma_{12} & \gamma_{13}\\
	0 & 1 & \frac{q_1}{p_1}\\
	0 & 0 & 1\\
	\end{array}},
	\corchetes{\begin{array}{ccc}
	p_2 & \mu_{12} & \mu_{13}\\
	0 & 1 & \frac{q_2}{p_2}\\
	0 & 0 & 1\\
	\end{array}}
    },$$
    where $W\cong \prodint{(1,0),(0,1)}$, $p_1,p_2\in\Z\setminus\set{-1,0,1}$, $q_1,q_2\in\Z\setminus\set{0}$ and $\gamma_{12},\gamma_{13},\mu_{12},\mu_{13}\in\C$ satisfying the conditions of Lemma \ref{lem_kmn4_combinaciones_Gamma1}(2).
    
\item 
    $$\mathcal{T}^\ast(W)\rtimes 
    \prodint{\gamma:=
     \corchetes{\begin{array}{ccc}
	pq & \gamma_{12} & \gamma_{13}\\
	0 & q & r\\
	0 & 0 & p\\
	\end{array}}
    },$$
    where $W\cong \prodint{(1,0),(0,1)}$, $p,q\in\Z\setminus\set{0}$ such that $\gamma$ is loxodromic, $r\in\Z$ and $\gamma_{12},\gamma_{13}\in\C$.
    
\item 
    $$\mathcal{T}^\ast(W)\rtimes 
    \prodint{
    \corchetes{\begin{array}{ccc}
	p+q\text{Re}(y)+iq\text{Im}(y) & \gamma_{12} & \gamma_{13}\\
	0 & 1 & \gamma_{23}\\
	0 & 0 & 1\\
	\end{array}}
    },$$
    $W\cong \prodint{(0,1),(0,y)}$, with $y\nin\R$. Also, $p,q\in\Z$ such that $\valorabs{p}+\valorabs{q}\neq 0$ and $\valorabs{\parentesis{p+q\text{Re}(y)}+iq\text{Im}(y)}\neq 1$, $\gamma_{12},\gamma_{23}\in\C$ and $\gamma_{23}\in\C^\ast$.
    
\item 
    $$\mathcal{T}^\ast(W)\rtimes 
    \prodint{\gamma:=
    \corchetes{\begin{array}{ccc}
	\alpha & \gamma_{12} & \gamma_{13}\\
	0 & \alpha^{-2}z_{p,q} & \gamma_{23}\\
	0 & 0 & \alpha z_{p,q}^{-1}\\
	\end{array}}
    },$$
    $W\cong \prodint{(0,1),(0,y)}$, with $y\nin\R$. Also, $\beta\in\C^{\ast}$, $z_{p,q}=\parentesis{p+q\text{Re}(y)}+iq\text{Im}(y)$, $\valorabs{p}+\valorabs{q}\neq 0$, $\gamma_{12},\gamma_{23},\gamma_{23}\in\C$. Finally, $\gamma$ must be loxodromic. 
    
\item 
    $$\mathcal{T}^\ast(W)\rtimes 
    \prodint{\gamma:=\corchetes{\begin{array}{ccc}
	p+qx & \gamma_{12} & \gamma_{13}\\
	0 & 1 & 0\\
	0 & 0 & (p+qx)^{-1}\beta^{-3}\\
	\end{array}}
    },$$
    $W\cong \prodint{(1,0),(x,0)}$, with $x\nin\R$. Also, $p,q\in\Z$ satisfying the conditions of Lemma \ref{lem_kmn4_combinaciones_Gamma3}(ii), $\beta\in\C^\ast$, $\gamma_{12}, \gamma_{13}\in\C$, and $\gamma$ is loxodromic.
    
\item 
    $$\mathcal{T}^\ast(W)\rtimes 
    \prodint{\gamma,
    \mu:=\corchetes{\begin{array}{ccc}
	p'+q'x & \mu_{12} & 0\\
	0 & 1 & 0\\
	0 & 0 & (p'+q'x)^{-1}\beta_2^{-3}\\
	\end{array}}
    },$$
    $W\cong \prodint{(1,0),(x,0)}$, with $x\nin\R$, and $\gamma$ defined by (6). Also, $p',q'\in\Z$ satisfying the conditions of Lemma \ref{lem_kmn4_combinaciones_Gamma3}(iii), and $\mu$ is loxodromic.
\end{enumerate}
\end{thm}

\begin{thm}\label{thm:main_4}
Let $\tilde{\Gamma}\subset U_+$ be a non-cyclic, discrete non-abelian group containing loxodromic elements. If the parabolic part of $\tilde{\Gamma}$ is conjugate to a dual torus group $\mathcal{T}^\ast(W)$ such that $r(W)=1$, then there exists a finite index subgroup $\Gamma\subset\tilde{\Gamma}$ conjugate to one of the following groups.

\begin{enumerate}

\item 
    $$\mathcal{T}^\ast(W)\rtimes 
    \prodint{
    \gamma :=
    \corchetes{\begin{array}{ccc}
	p\alpha & \gamma_{12} & \gamma_{13}\\
	0 & \alpha & 0\\
	0 & 0 & p^{-1}\alpha^{-2}\\
	\end{array}}
    },$$
    where $W\cong \prodint{(1,0)}$, $p\in\Z\setminus\set{0,1}$, $\alpha\in\C^{\ast}$, $\valorabs{\alpha}\neq 1$, and $\gamma_{12},\gamma_{13}\in\C$.

\item 
    $$\mathcal{T}^\ast(W)\rtimes 
    \prodint{
    \gamma,
    \mu:=\corchetes{\begin{array}{ccc}
	q\beta & \frac{\beta jpq}{1-p} & \mu_{13}\\
	0 & \beta & 0\\
	0 & 0 &q^{-1}\beta^{-2}\\
	\end{array}}
    },$$
    where $W\cong \prodint{(1,0)}$, $\gamma$ defined by (1). Also, $\beta\in\C^\ast$, $j,q\in\Z$, $q\neq 0$ such that $\mu$ is loxodromic, and $\mu_{13}\in\C$, if $p^2\alpha^3\neq 1$, $\mu_{13}=0$. 
    
\item 
    $$\mathcal{T}^\ast(W)\rtimes 
    \prodint{
    \gamma,
    \mu,\eta:=\corchetes{\begin{array}{ccc}
	r\delta & \frac{\delta k pr}{1-p} & \eta_{13}\\
	0 & \delta & 0\\
	0 & 0 &q^{-1}\delta^{-2}\\
	\end{array}}
    },$$
    where $W\cong \prodint{(1,0)}$ and $\gamma,\mu$ are given by (1) and (2). Also, $\delta\in\C^\ast$, $r\in\Z\setminus\set{-1,0,1}$, $r\delta^3\neq 1$ such that $\eta$ is loxodromic, and $\eta_{13},k\in\C$ satisfying the conditions of Lemma \ref{lem_kmn4_combinaciones_Gamma4}. 
    
\item 
    $$\mathcal{T}^\ast(W)\rtimes 
    \prodint{
    \corchetes{\begin{array}{ccc}
	p & \gamma_{12} & \gamma_{13}\\
	0 & 1 & \gamma_{23}\\
	0 & 0 & 1\\
	\end{array}}
    },$$
	where $W\cong \prodint{(0,1)}$, $p\in\Z\setminus\set{-1,0,1}$, $\gamma_{12},\gamma_{13}\in\C$, $\gamma_{23}\in\C^\ast$.

\item 
    $$\mathcal{T}^\ast(W)\rtimes 
    \prodint{
    \gamma:=\corchetes{\begin{array}{ccc}
	\alpha & \gamma_{12} & \gamma_{13}\\
	0 & p\alpha^{-2} & \gamma_{23}\\
	0 & 0 & p^{-1}\alpha\\
	\end{array}}
    },$$
	where $W\cong \prodint{(0,1)}$, $\alpha\in\C^{\ast}$, $p\in\Z\setminus\set{0}$, $p^2\neq \alpha^3$ and $\gamma$ is loxodromic.

\item 
    $$\mathcal{T}^\ast(W)\rtimes 
    \prodint{
    \gamma,
    \mu=\corchetes{\begin{array}{ccc}
	\beta & 0 & \frac{jp}{1-p}\beta\\
	0 & q\beta^{-2} & 0\\
	0 & 0 & q^{-1}\beta\\
	\end{array}}
    },$$
	where $W\cong \prodint{(0,1)}$ and $\gamma$ is given by (5). Also, $\beta\in\C^\ast$, $q,j\in\Z\setminus\set{0}$, $q^2\neq\beta^3$ and $\mu$ is loxodromic.
\end{enumerate}
\end{thm}

The paper is organized as follows: In Section \ref{sec_background} we present a brief background necessary for the next sections, we also summarize the necessary results from \cite{toledo2021dynamics}. In Section \ref{sec_representation}, we state and prove the technical results describing or dismissing combinations of parabolic and loxodromic elements. These results together provide a full description of the representation of upper triangular discrete triangular subgroups of $\PSL$. Finally, in Section \ref{sec:proofs}, we prove the main theorems.

\section{Notation and Background} \label{sec_background}

\subsection{Complex Kleinian groups}

The complex projective plane $\mathbb{CP}^2$ is defined as
	$$\mathbb{CP}^2=(\mathbb{C}^{3}\setminus\{0\})/\mathbb{C}^\ast,$$
where $\mathbb{C}^\ast:=\mathbb{C}\setminus\{0\}$ acts by the usual scalar multiplication. Let $\left[\;\right]:\mathbb{C}^3\setminus\{0\}\rightarrow\mathbb{CP}^2$ be the quotient map. We denote the projectivization of the point $x=(x_1,x_2,x_3)\in\mathbb{C}^3$ by $[x]=\left[x_1:x_2:x_3\right]$. We denote by $e_1,e_2,e_3$ the projectivization of the canonical base of $\mathbb{C}^3$.

Let $\mathcal{M}_{3}\left(\mathbb{C}\right)$ be the group of all square matrices of size $3\times 3$ with complex coefficients, and let $\text{SL}\left(3,\mathbb{C}\right)\subset\mathcal{M}_{3}\left(\mathbb{C}\right)$ (resp. $\text{GL}\left(3,\mathbb{C}\right)$) be the subgroup of matrices with determinant equal to $1$ (resp. not equal to $0$). The group of biholomorphic automorphisms of $\mathbb{CP}^2$ is given by 
	$$\text{PSL}\left(3,\mathbb{C}\right)\cong\text{PGL}\left(3,\mathbb{C}\right):=\text{GL}\left(3,\mathbb{C}\right)/\{\text{scalar matrices}\}.$$
If $g\in\text{PSL}\left(3,\mathbb{C}\right)$ (resp. $z\in\mathbb{CP}^2$), we denote by $\mathbf{g}\in\text{SL}\left(3,\mathbb{C}\right)$ any of its lifts (resp. $\mathbf{z}\in\mathbb{C}^3$). We denote again by $\left[\;\right]:\mathbb{C}^3\setminus\{0\}\rightarrow\mathbb{CP}^2$ the quotient map. We denote by $\text{Fix}(g)\subset\mathbb{CP}^2$ the set of fixed points of an automorphism $g\in\text{PSL}\left(3,\mathbb{C}\right)$. We will denote by $\text{Diag}(a_1,a_2,a_3)$ to the diagonal element of $\PSL$ with diagonal entries $a_1,a_2,a_3$.

As in the case of automorphisms of $\mathbb{CP}^1$, we classify the elements of $\text{PSL}\left(3,\mathbb{C}\right)$ in three classes: elliptic, parabolic and loxodromic. However, unlike the classical case, there are several subclasses in each case (see Section 4.2 of \cite{ckg_libro}). We now give a quick summary of the subclasses of elements we will be using.

\begin{defn}
An element $g\in\text{PSL}\left(3,\mathbb{C}\right)$ is said to be:
	\begin{itemize}
	\item \emph{Elliptic} if it has a diagonalizable lift in $\text{SL}\left(3,\mathbb{C}\right)$ such that every eigenvalue has norm 1.
	\item \emph{Parabolic} if it has a non-diagonalizable lift in $\text{SL}\left(3,\mathbb{C}\right)$ such that every eigenvalue has norm 1.
	\item \emph{Loxodromic} if it has a lift in $\text{SL}\left(3,\mathbb{C}\right)$ with an eigenvalue of norm distinct of 1. Furthermore, we say that $g$ is:
		\begin{itemize}
		\item \emph{Loxo-parabolic} if it is conjugate to an element $h\in\text{PSL}\left(3,\mathbb{C}\right)$ such that
			$$\mathbf{h}=\left(
			\begin{array}{ccc}
 			\lambda & 1 & 0 \\
 			0 & \lambda & 0 \\
 			0 & 0 & \lambda^{-2}
			\end{array}\right),\;|\lambda|\neq 1.$$
		\item A \emph{complex homothety} if it is conjugate to an element $h\in\text{PSL}\left(3,\mathbb{C}\right)$ such that $\mathbf{h}=\text{Diag}\left(\lambda,\lambda,\lambda^{-2}\right)$, with $|\lambda|\neq 1$.
		\item A \emph{rational (resp. irrational) screw} if it is conjugate to an element $h\in\text{PSL}\left(3,\mathbb{C}\right)$ such that $\mathbf{h}=\text{Diag}\left(\lambda_1,\lambda_2,\lambda_3\right)$, with $\left| \lambda_1 \right|=\left|\lambda_2 \right| \neq \left|\lambda_3\right|$ and $\lambda_1\lambda_2^{-1}=e^{2\pi i \theta}$ with $\theta\in \mathbb{Q}$ (resp. $\theta\in\mathbb{R}\setminus\mathbb{Q}$).
		\item \emph{Strongly loxodromic} if it is conjugate to an element $h\in\text{PSL}\left(3,\mathbb{C}\right)$ such that $\mathbf{h}=\text{Diag}\left(\lambda_1,\lambda_2,\lambda_3\right)$, where the elements $\{|\lambda_1|,|\lambda_2|,|\lambda_3|\}$ are pairwise different.
		\end{itemize}		 
	\end{itemize}
\end{defn}

If $\Gamma\subset\PSL$ is a group, we denote by $\Gamma_p\subset\Gamma$ the subgroup generated by all parabolic elements of $\Gamma$. We will say that $\Gamma_p$ is the parabolic part of $\Gamma$.

An element $\gamma\in\Gamma$ is called a \emph{torsion element} if it has finite order. The group $\Gamma$ is a \emph{torsion free group} if the only torsion element in $\Gamma$ is the identity. We denote the rank of a group $\Gamma$ by $r(\Gamma)$.


\begin{definition}
Let $G$ be a group. The derived series $\set{G^{(i)}}$ of $G$ is defined inductively as
	$$\begin{array}{cc}
	G^{(0)}=G, & G^{(i+1)}=\left[G^{(i)},G^{(i)}\right].
	\end{array}$$
One says that $G$ is \emph{solvable} if, for some $n\geq 0$, we have $G^{(n)}=\set{\id}$. 
\end{definition}



\subsection{Upper triangular groups}

We denote the upper triangular subgroup of $\text{PSL}\left(3,\mathbb{C}\right)$ by
	$$U_{+}=\left\{\left[\begin{array}{ccc}
	a_{11} & a_{12} & a_{13} \\
	0 & a_{22} & a_{23} \\
	0 & 0 & a_{33}
	\end{array}\right]\,\middle|\,a_{11}a_{22}a_{33}=1,\text{ }a_{ij}\in\mathbb{C}\right\}.$$
Let $\lambda_{12},\lambda_{23},\lambda_{13}:(U_{+},\cdot)\rightarrow(\mathbb{C}^{\ast},\cdot)$ be the group morphisms given by
	$$\begin{array}{ccc}
	\lambda_{12}\left(\left[\alpha_{ij}\right]\right)=\alpha_{11}\alpha^{-1}_{22}, &
	\lambda_{23}\left(\left[\alpha_{ij}\right]\right)=\alpha_{22}\alpha^{-1}_{33}, &
	\lambda_{13}\left(\left[\alpha_{ij}\right]\right)=\alpha_{11}\alpha^{-1}_{33}.	
	\end{array}$$

If $\Gamma\subset U_{+}$ is a group, we simplify the notation by writing $\text{Ker}\left(\lambda_{ij}\right)$ instead of $\text{Ker}\left(\lambda_{ij}\right)\cap \Gamma$.

The following theorem states that, up to finite index, we can consider upper triangular subgroups of $\PSL$ instead of solvable subgroups of $\PSL$.

\begin{thm}[Theorem 3.6 of \cite{wehrfritz2012infinite}]\label{teo_solvable_triangularizables}
Let $G\subset\text{GL}\left(3,\mathbb{C}\right)$ be a discrete solvable subgroup, then $G$ is virtually triangularizable. 
\end{thm}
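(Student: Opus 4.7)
The plan is to invoke the Lie--Kolchin theorem after passing to the Zariski closure. Recall that Lie--Kolchin says that any connected solvable subgroup of $\GLN$ can be simultaneously conjugated into upper triangular form. The strategy will be to find a finite-index subgroup of $G$ that lies inside a connected solvable algebraic group, and then apply Lie--Kolchin to that.

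First, I would form the Zariski closure $\overline{G} \subset \GL$. Solvability is a property preserved under Zariski closure (the derived series of $\overline{G}$ is the Zariski closure of the derived series of $G$, so if $G^{(n)} = \{\id\}$ then $\overline{G}^{(n)} = \{\id\}$ as well). Thus $\overline{G}$ is a solvable linear algebraic group over $\C$. Let $\overline{G}^0$ denote the connected component of the identity in $\overline{G}$ (in the Zariski topology). A standard fact about linear algebraic groups is that $\overline{G}^0$ has finite index in $\overline{G}$; the quotient $\overline{G}/\overline{G}^0$ is the (finite) component group.

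Next, set $H = G \cap \overline{G}^0$. Since $\overline{G}^0$ has finite index in $\overline{G}$ and $G \subseteq \overline{G}$, the subgroup $H$ has finite index in $G$. By construction, $H$ is a subgroup of the connected solvable linear algebraic group $\overline{G}^0$. Applying the Lie--Kolchin theorem to $\overline{G}^0$ yields an element $P \in \GL$ such that $P\, \overline{G}^0 \, P^{-1}$ consists entirely of upper triangular matrices. In particular, $P H P^{-1}$ is upper triangular, so $H$ is triangularizable. This exhibits the desired finite-index triangularizable subgroup of $G$.

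The main obstacle is ensuring that the passage to the Zariski closure interacts well with the discreteness hypothesis, since discreteness is not preserved by Zariski closure: $\overline{G}$ may be much larger than $G$ and need not be discrete. However, discreteness is not used in the argument above except implicitly via the hypotheses of Theorem \ref{teo_solvable_triangularizables}; what is truly needed is only that $G$ be a solvable subgroup of $\GL$, so that the Zariski closure is a solvable algebraic group. The delicate point to verify carefully is therefore the inheritance of solvability by the Zariski closure and the finiteness of the component group of $\overline{G}$, both of which are standard in the theory of linear algebraic groups and hold in full generality without any discreteness assumption.
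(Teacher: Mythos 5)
Your argument is correct: it is the standard proof of Mal'cev's theorem that any solvable subgroup of $\GLN$ over an algebraically closed field is virtually triangularizable, obtained by passing to the Zariski closure, restricting to the identity component (which has finite index since an algebraic group has finitely many components), and applying Lie--Kolchin. The only imprecision is the parenthetical claim that the derived series of $\overline{G}$ \emph{equals} the closure of the derived series of $G$; what one actually quotes is the containment $[\overline{H},\overline{H}]\subseteq\overline{[H,H]}$, which gives $\overline{G}^{(i)}\subseteq\overline{G^{(i)}}$ and is all you need. Note that this paper does not reprove the statement --- it is imported as Theorem 3.1 of the companion work \cite{solv_dynamics} --- so a line-by-line comparison is not possible here; but your observation that discreteness plays no role is accurate and worth emphasizing: the conclusion holds for arbitrary solvable subgroups of $\GL$, with the discreteness hypothesis appearing only because that is the setting of interest for complex Kleinian groups. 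If anything, your route is the cleaner and more general one, at the cost of invoking the machinery of linear algebraic groups rather than working directly with invariant flags and eigenvectors.
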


The following proposition can be found in \cite{barrera2022}.

\begin{prop}[Lemma 4.10 of \cite{barrera2022}]
\label{prop:assume_torsion_free}
Let $\Gamma\subset U_+$ be a discrete group, then $\Gamma$ contains a finite index subgroup $\Gamma_0\subset \Gamma$ such that the groups $\Gamma_0$, $\lambda_{12}(\Gamma_0)$, $\lambda_{23}(\Gamma_0)$ are torsion free and finitely generated.
\end{prop}

Now, we recall some results from \cite{toledo2021dynamics} that will be used in Section \ref{sec_representation}.

\begin{prop}[Corollary 4.8 of \cite{toledo2021dynamics}]\label{cor_no_hay_CH_en_no_conmutativos}
Let $\Gamma\subset U_+$ be a non-abelian, torsion-free discrete subgroup, then $\Gamma$ cannot contain complex homotheties.  
\end{prop}

Recall the definition of the following purely parabolic subgroup of $\Gamma$ which determines the dynamics of $\Gamma$.
	$$\core(\Gamma)=\kernel(\Gamma)\cap\kernel(\lambda_{12})\cap\kernel(\lambda_{23}).$$
We denote the elements of $\core(\Gamma)$ by
	$$g_{x,y}=\corchetes{\begin{array}{ccc}
	1 & x & y \\
	0 & 1 & 0 \\
	0 & 0 & 1\\
	\end{array}}.$$
	
	

For the sake of simplicity, we also denote
$$h_{x,y}=\corchetes{\begin{array}{ccc}
	1 & 0 & x\\
	0 & 1 & y\\
	0 & 0 & 1\\
	\end{array}},\; x,y\in\C.$$


\begin{thm}[Theorem 5 of \cite{toledo2021dynamics}]\label{thm_descomposicion_caso_noconmutativo}
Let $\Gamma\subset U_{+}$ be a non-abelian, torsion-free, complex Kleinian group, then $\Gamma$ can be written in the following way
	$$\Gamma = \text{Core}(\Gamma)\rtimes\langle\xi_1\rangle\rtimes...\rtimes\langle\xi_p\rangle\rtimes\langle\eta_1\rangle\rtimes...\rtimes\langle\eta_m\rangle\rtimes\langle\gamma_1\rangle\rtimes...\rtimes\langle\gamma_n\rangle$$
	where
	$$\begin{array}{rl}
	\lambda_{23}(\Gamma)=\langle\lambda_{23}(\gamma_1),...,\lambda_{23}(\gamma_n)\rangle, & n=r\left(\lambda_{23}(\Gamma)\right).\\
	\lambda_{12}\left(\text{Ker}(\lambda_{23})\right)=\langle\lambda_{12}(\eta_1),...,\lambda_{12}(\eta_m)\rangle, & m=r\left(\lambda_{12}(\text{Ker}(\lambda_{23})\right).\\
	\Pi\left(\text{Ker}(\lambda_{12})\cap\text{Ker}(\lambda_{23})\right)=\langle\Pi(\xi_1),...,\Pi(\xi_p)\rangle, & p=r\left(\Pi\left(\text{Ker}(\lambda_{12})\cap\text{Ker}(\lambda_{23})\right)\right).
	\end{array}$$
\end{thm}

\begin{thm}[Theorem 7 of \cite{toledo2021dynamics}]\label{thm_descomposicion_caso_noconmutativo2}
Let $\Gamma\subset U_{+}$ be a non-abelian, torsion free, complex Kleinian group, then $r(\Gamma)\leq 4$. Using the notation of Theorem \ref{thm_descomposicion_caso_noconmutativo}, it holds $k+p+m+n\leq 4$, where $k=r\parentesis{\core(\Gamma)}$.
\end{thm}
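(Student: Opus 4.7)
The plan is to bound the total rank by bounding each factor of the semidirect product from Theorem \ref{thm_descomposicion_caso_noconmutativo} and then sharpening the naive bounds using discreteness, Corollary \ref{cor_HC3_no_hay_en_no_conmutativos}, and Proposition \ref{prop_cono_invariante}.

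First I would record the easy ambient bounds. The core $\core(\Gamma)$ sits inside the abelian group $\SET{g_{x,y}}{x,y\in\C}\cong\C^2$, whose real dimension is $4$, so discreteness gives $k\leq 4$. The map $\Pi$ sends the unipotent part outside the core into $\C$, so $r\leq 2$. The eigenvalue ratios $\lambda_{23}$ and $\lambda_{12}|_{\kernel(\lambda_{23})}$ take values in $\C^\ast$, whose discrete subgroups have rank at most $2$, giving $n\leq 2$ and $m\leq 2$. These naive bounds only yield $k+r+m+n\leq 10$, so a finer analysis is required.

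The heart of the argument is to show that each additional ``diagonal'' generator $\gamma_j$ or $\eta_i$ forces a compensating drop in the parabolic rank $k+r$. When $n=m=0$, $\Gamma$ lies entirely in the unipotent (Heisenberg) part, and non-commutativity forces at least one $\xi_i$; a rank count inside the $3$-complex-dimensional Heisenberg group combined with discreteness caps $k+r\leq 4$. When $n+m\geq 1$, the conjugation action of each such generator on $\core(\Gamma)\otimes\R$ is an $\R$-linear automorphism with at least one eigenvalue of modulus $\neq 1$: Corollary \ref{cor_HC3_no_hay_en_no_conmutativos} excludes type III complex homotheties, and Proposition \ref{prop_cono_invariante} forces the invariant directions to align with the lines of $\mathcal{C}(\Gamma)$. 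Discreteness of $\core(\Gamma)$ under such a contraction costs one rank per diagonal generator, giving $k+r\leq 4-(m+n)$ and summing to $k+r+m+n\leq 4$.

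The main obstacle will be handling the interactions between several $\gamma_j$ and $\eta_i$ when their eigenvalue ratios lie on the unit circle (the screw subcases) or when two generators carry inversely related eigenvalues, since in those situations the conjugation action need not be contracting and the naive ``one rank per generator'' bookkeeping has to be replaced by a joint argument on the real $4$-dimensional lattice $\core(\Gamma)$. Here I would split according to the eigenvalue classification (complex homothety, loxoparabolic, rational/irrational screw, strongly loxodromic), using invariance of $\mathcal{C}(\Gamma)$ to pin down the admissible core directions and torsion-freeness to rule out accidental root-of-unity coincidences; verifying case by case that the joint linear action still collapses the admissible core dimension by the required amount then produces the uniform bound $k+r+m+n\leq 4$.
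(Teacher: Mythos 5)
This theorem is not proved in the present paper: it is quoted as Theorem 4.25 of \cite{solv_dynamics}, so there is no in-paper proof to measure your attempt against, and it must be judged on its own. Judged that way, it has a genuine gap at its central step. The assertion that ``discreteness of $\core(\Gamma)$ under such a contraction costs one rank per diagonal generator, giving $k+r\leq 4-(m+n)$'' is the theorem itself restated, and the mechanism you offer for it is false as a general principle: a linear automorphism with eigenvalues off the unit circle can preserve a full-rank lattice (any hyperbolic element of $\text{GL}(4,\Z)$ does). Indeed, the case $301(2)$ treated in Section \ref{sec_representation} of this paper exhibits exactly this phenomenon: a strongly loxodromic $\gamma$ whose conjugation action preserves the rank-$3$ parabolic lattice of an Inoue-surface group via a hyperbolic matrix in $\text{SL}(3,\Z)$, with no loss of rank. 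So the true obstruction to configurations such as $k+r=4$, $n=1$ is not ``a contraction kills a lattice direction'' but an accumulation of arithmetic and structural constraints — integrality of eigenvalue ratios forced by normality of each partial semidirect product, Corollary \ref{cor_HC3_no_hay_en_no_conmutativos}, torsion-freeness, and the classification of the parabolic part in Theorem \ref{thm_descripcion_parte_parabolica} — which must be verified configuration by configuration. Your closing paragraph concedes that this case analysis remains to be done; but that analysis is precisely the content of the theorem (and, in effect, of the dozen lemmas that make up Section \ref{sec_representation}), so what you have is a plan rather than a proof.

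A secondary problem: several of your ``easy ambient bounds'' silently assume that the images $\lambda_{23}(\Gamma)$, $\lambda_{12}\parentesis{\kernel(\lambda_{23})}$ and $\Pi\parentesis{\kernel(\lambda_{12})\cap\kernel(\lambda_{23})}$ are discrete subgroups of $\C^\ast$ or of $\C$. Discreteness of $\Gamma$ does not imply discreteness of these images; case (3) of Theorem \ref{thm_descripcion_parte_parabolica} explicitly allows a non-discrete subgroup $R\subset\C$ of rank $3$ arising as such an image. Consequently the claim $r\leq 2$ is unjustified as stated, and the same caveat applies to $m\leq 2$ and $n\leq 2$. These bounds are not where the theorem's difficulty lies, but the error signals that the discreteness arguments you lean on later (for the ``rank drop'') would need to be made on $\Gamma$ itself rather than on its images.
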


Theorem \ref{thm_descomposicion_caso_noconmutativo} gives a decomposition of the group $\Gamma$ in four layers, the first two layers are made of parabolic elements and the last two layers are made of loxodromic elements. The description of these four layers are summarized in Table \ref{fig_noconmutativo_capas}.

\begin{table}[H]
\begin{center}
  \begin{tabular}{  c  c  c  c  }
    \multicolumn{2}{c}{Parabolic} & \multicolumn{2}{c}{Loxodromic}\\
    \multicolumn{2}{c}{$\overbrace{\hspace{25ex}}$} & \multicolumn{2}{c}{$\overbrace{\hspace{45ex}}$}\\
    $\corchetes{\begin{array}{ccc}
	1 & x & y\\
	0 & 1 & 0 \\
	0 & 0 & 1\\
	\end{array}}$ & $\corchetes{\begin{array}{ccc}
	1 & x & y\\
	0 & 1 & z\\
	0 & 0 & 1\\
	\end{array}}$ & $\corchetes{\begin{array}{ccc}
	\alpha & x & y\\
	0 & \beta & z\\
	0 & 0 & \beta\\
	\end{array}}$ & $\corchetes{\begin{array}{ccc}
	\alpha & x & y\\
	0 & \beta & z\\
	0 & 0 & \gamma\\
	\end{array}}$\\ 
     & $z\neq 0$ & $\alpha\neq\beta$, $z\neq 0$ & $\beta\neq\gamma$     \\
     & & \begin{tabular}{c}
     \small{Loxo-parabolic} \\
     \end{tabular} & \begin{tabular}{c}
     \small{Loxo-parabolic} \\
     \small{Complex homothety} \\
     \small{Strongly loxodromic} \\
     \end{tabular} \\  
    $\core(\Gamma)$ & $A\setminus\kernel(\Gamma)$ & $\text{Ker}(\lambda_{23})\setminus A$ & \\
    & & & \\
    \multicolumn{2}{c}{Parabolic Part $\Gamma_p$} & Third Layer & Fourth Layer \\
    & & & 
  \end{tabular}
\end{center}
\caption{\small The decomposition of a non-abelian subgroup of $U_+$ in four layers.}
\label{fig_noconmutativo_capas}
\end{table}

\section{Representations of upper triangular groups}
\label{sec_representation}

In this section, we prove the technical lemmas necessary to give an explicit description of all upper triangular groups. These results will be organized and summarized in Section \ref{sec:proofs}. For further details, see \cite{mitesis}. 

The following result from \cite{barrera2022} describes the parabolic part of a complex Kleinian group $\Gamma\subset U_+$. Our results will describe the loxodromic elements compa-tible with these parabolic parts.


\begin{thm}[Theorem 3.1 of \cite{barrera2022}]
\label{thm_descripcion_parte_parabolica}
Let $\tilde{\Gamma}\subset\PSL$ be a complex Kleinian group without loxodromic elements, then there exists a finite index subgroup $\Gamma\subset\tilde{\Gamma}$ conjugate to one of the following groups.
	\begin{enumerate}[(1)]
	\item An elliptic group,
		$$\Ell(W,\mu)=\SET{\corchetes{\begin{array}{ccc}
	\mu(w) & w\mu(w) & 0\\
	0 & \mu(w) & 0\\
	0 & 0 & \mu(w)^{-2}\\
	\end{array}}}{w\in W}$$
	where $W\subset\C$ is a discrete subgroup and $\mu:(W,+)\rightarrow(\Ss^1,\cdot)$ is a group morphism. 
	\item A torus group $\mathcal{T}(W)$, 
	where $W\subset\C^2$ is a additive subgroup.
	\item An abelian Kodaira group,
		$$\text{Kod}_0(W,R,L)=\SET{\corchetes{\begin{array}{ccc}
	1 & x & L(x)+\frac{x^2}{2}+w\\
	0 & 1 & x\\
	0 & 0 & 1\\
	\end{array}}}{x\in R, w\in W}$$
	where $W\subset\C$ is an additive discrete subgroup, $R\subset\C$ is an additive subgroup and $L:R\rightarrow\C$ is an additive function such that
		\begin{itemize}
		\item If $R$ is discrete, then $r(W)+r(R)\leq 4$.
		\item If $R$ is not discrete, then $r(W)\leq 1$, $r(W)+r(R)\leq 4$ and
			$$\lim_{n\rightarrow\infty} L(x_n) +w_n=\infty$$
			for any sequence $\set{w_n}\subset W$, and any sequence $\set{x_n}\subset R$ such that $x_n\rightarrow 0$. 		
		\end{itemize}		 
	\item A dual torus group, where $W\subset\C^2$ is a discrete additive subgroup with $r(W)\leq 2$.

	\item An Inoue group,
		$$\text{Ino}(\mathbf{w})=\SET{\corchetes{\begin{array}{ccc}
	1 & 1 & 0\\
	0 & 1 & 0\\
	0 & 0 & 1\\
	\end{array}}^k \corchetes{\begin{array}{ccc}
	1 & \frac{p}{q} & \frac{1}{s}\\
	0 & 1 & 0\\
	0 & 0 & 1\\
	\end{array}}^l \corchetes{\begin{array}{ccc}
	1 & x & y\\
	0 & 1 & 1\\
	0 & 0 & 1\\
	\end{array}}^m}{k,l,m\in\Z},$$
	where $\mathbf{w}=(x,y,p,q,s)$; $x,y\in\C$, $p,q,s\in\Z$ such that $q,s\neq 0$, $p,q$ are co-primes and $q^2$ divides $s$. 
	\item An extended Kodaira groups of type 1 or 2,
		$$\text{Kod}_i(W,x,y,z) = \SET{
		\corchetes{\begin{array}{ccc}
    	1 & 0 & w\\
    	0 & 1 & 0\\
    	0 & 0 & 1\\
    	\end{array}} 
		\corchetes{\begin{array}{ccc}
    	1 & 1 & 0\\
    	0 & 1 & 1\\
    	0 & 0 & 1\\
    	\end{array}}^n
		\corchetes{\begin{array}{ccc}
    	1 & x+z & y\\
    	0 & 1 & z\\
    	0 & 0 & 1\\
    	\end{array}}^m}{
	\begin{array}{l}
	m,n\in\Z \\
	w\in W
	\end{array}
	},$$
	with $i=1,2$. $W\subset\C$ is a non-trivial, additive discrete group, and $x,y,z\in\C$ are such that $x\in W\setminus\set{0}$ and, either:
		\begin{enumerate}
		\item[{\footnotesize\textbf{Type $i=$1:}}] $z\nin\R$.
		\item[{\footnotesize\textbf{Type $i=$2:}}] $z\in\R\setminus\Q$ and $r(W)=1$.
		\end{enumerate}
	\end{enumerate}

\end{thm}

Given a non-abelian, torsion free complex Kleinian group $\Gamma\subset U_+$, the parabolic part of $\Gamma$ is conjugate to one of the six options given by Theorem \ref{thm_descripcion_parte_parabolica}. According to Theorem \ref{thm_descomposicion_caso_noconmutativo}, $\Gamma_p$ correspond to the semidirect product of elements of the first two layers. Therefore, in order to give the full description of $\Gamma$, we have to check whether we can add one, two or three loxodromic elements of the third or fourth layer, taking into account that the rank of $\Gamma$ has to be at most four. In Table \ref{table:casos_rangos_abc} we summarize all these possible cases.

We denote by $N_3$ and $N_4$ the groups generated by the elements of the third and fourth layer respectively. Given $m,n$ not simultaneously $0$, we say that a discrete group $\Gamma\subset U_+$ belongs to case $kmn$ if $r(\Gamma_p)=k$, $r(N_3)=m$ and $r(N_4)=n$. Additionally, we say that $\Gamma$ belongs to subcase $kmn(j)$ if $\Gamma_p$ is conjugate to a group given by case (j) of Theorem \ref{thm_descripcion_parte_parabolica}.


Observe that purely parabolic cases 400, 300, 200 and 100 have already been described in \cite{barrera2022} (these are the cases listed in Theorem \ref{thm_descripcion_parte_parabolica}). 

\begin{table}[H]
\centering
\scriptsize
  \begin{tabular}{ | c | c | c | c || c | c | c | c || c | c | c | c ||}
    \hline
    Case & r$(\Gamma_p)$ & r$(N_3)$ & r$(N_4)$ & Case & r$(\Gamma_p)$ & r$(N_3)$ & r$(N_4)$ & Case & r$(\Gamma_p)$ & r$(N_3)$ & r$(N_4)$ \\ \hline
    400 & 4 & 0 & 0 & 202 & 2 & 0 & 2 & 111 & 1 & 1 & 1 \\ \hline
    310 & 3 & 1 & 0 & 201 & 2 & 0 & 1 & 110 & 1 & 1 & 0 \\ \hline
    301 & 3 & 0 & 1 & 200 & 2 & 0 & 0 & 103 & 1 & 0 & 3 \\ \hline
	300 & 3 & 0 & 0 & 130 & 2 & 0 & 0 & 102 & 1 & 0 & 2 \\ \hline    
	220 & 2 & 2 & 0 & 121 & 1 & 2 & 1 & 101 & 1 & 0 & 1 \\ \hline    
	211 & 2 & 1 & 1 & 120 & 1 & 2 & 0 & 100 & 1 & 0 & 0 \\ \hline    
	210 & 2 & 1 & 0 & 112 & 1 & 1 & 2 & & & & \\ \hline	
  \end{tabular}
\caption{All the possibilities for the rank of each layer of $\Gamma$.}
\label{table:casos_rangos_abc}
\end{table}

We will separately describe subcases $kmn(1),...,kmn(6)$ in each of the following subsections.


We will repeatedly use the arguments given in the following lemma in the following subsections. The proof of the lemma is straightforward.

\begin{lem}\label{obs_los_dos_calculos}
Let $\Gamma\subset U_+$ a discrete non-abelian group and let $\Gamma_p\subset\Gamma$ be the subgroup generated by all the parabolic elements of $\Gamma$.
	\begin{enumerate}[(i)]
	\item Using the decomposition given in Theorem \ref{thm_descomposicion_caso_noconmutativo}, if $\Gamma=\prodint{\gamma_1}\rtimes\cdots\rtimes\prodint{\gamma_N}$ for $1<m\leq N$, it follows that $\gamma_m^{-1}\gamma_1...\gamma_{m-1}\gamma_m=\gamma_1^{p_1}...\gamma_{m-1}^{p_{m-1}}$ for some $p_1,...,p_{m-1}\in\Z$. This is a consequence of the normality condition of the semidirect product 
	\item Let $\gamma_1,\gamma_2\in\Gamma$ be two loxodromic elements, then $\corchetes{\gamma_1,\gamma_2}\in\Gamma_p$.
	\end{enumerate}
\end{lem}

\subsection{The parabolic is an elliptic group}

The next lemma states that subgroups of $U_+$ whose parabolic part is an elliptic group are abelian. Representations of abelian discrete subgroups of $U_+$ have already been described in Section 4 of \cite{toledo2021dynamics}. 

\begin{lem}\label{lem:elliptic_abelian}
Let $\Gamma\subset U_+$ be a complex Kleinian group such that $\Gamma_p$ is conjugate to an elliptic group $\Ell(W,\mu)$ for some subgroup $W\subset\C$, and a group morphism $\mu$ as in the case (1) of Theorem \ref{thm_descripcion_parte_parabolica}. Then $\Gamma$ is abelian.
\end{lem}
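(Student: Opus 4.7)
The plan is to assume $\Gamma$ is non-commutative and derive a contradiction. Writing $A_w$ for the generic matrix of $\Gamma_{W,\mu}$, a direct multiplication shows $A_w A_{w'} = A_{w+w'}$, so $\Gamma_p$ is abelian and any failure of commutativity of $\Gamma$ must be witnessed by a non-parabolic element. Using the semidirect decomposition of Theorem \ref{thm_descomposicion_caso_noconmutativo} together with the normality of $\Gamma_p$ in $\Gamma$, I would pick a loxodromic $\gamma \in \Gamma$ whose conjugation action on $\Gamma_p$ is non-trivial.

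Writing $\gamma$ in upper triangular form with diagonal entries $a,b,c$ and super-diagonal entries $p,q,r$, the first step is to constrain its shape. Matching the identity $\gamma A_w = A_{w'}\gamma$ entry by entry at positions $(2,3)$ and $(1,3)$ produces the relations $r(\mu(w)^{-2}-\mu(w))=0$ and an analogous one for $q$. Picking $w \in W$ with $\mu(w)^{3}\ne 1$ forces $q=r=0$; the degenerate case in which $\mu|_{W}$ lands entirely in cube roots of unity is handled via invariance under $\gamma$ of the common fixed line $\{x_{2}=0\}$ of $\Gamma_p$. The conjugation rule then simplifies to
\begin{equation*}
\gamma A_w \gamma^{-1} = A_{(a/b)w},
\end{equation*}
so multiplication by $a/b$ is an automorphism of the discrete group $W$.

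From here discreteness does most of the work. If $|a/b|\ne 1$, the sequence $(a/b)^{n}w$ lies in $W$ but accumulates at $0$ for any $w\ne 0$, contradicting discreteness; hence $|a/b|=1$. The orbit of a non-zero $w$ is then bounded and discrete, hence finite, so $a/b=\zeta$ is a root of unity of some order $k\ge 2$ (with $k=1$ excluded by our choice of $\gamma$). Since $\gamma$ is loxodromic with $|a|=|b|$, the determinant relation forces $|a|\ne 1$. A standard matrix-power identity gives the $(1,2)$-entry of $\gamma^{k}$ as $p(a^{k}-b^{k})/(a-b)$, which vanishes because $(a/b)^{k}=1$; therefore $\gamma^{k}=\text{Diag}(a^{k},a^{k},c^{k})$ with $|a^{k}|\ne 1$ is a type III complex homothety, contradicting Corollary \ref{cor_HC3_no_hay_en_no_conmutativos}.

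The subtlest step I anticipate is pinning down the shape of $\gamma$: the degenerate case where $\mu|_{W}$ lands in cube roots of unity is not ruled out purely algebraically by the matrix identity, and must be handled via the geometric invariance of the common fixed locus of $\Gamma_p$ under $\gamma$.
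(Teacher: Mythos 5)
Your route is genuinely different from the paper's. The paper settles this case in two lines: pure parabolicity forces $\valorabs{\mu(w)^{3}}=1$, the restriction of $\lambda_{23}$ to $\Gamma_p$ then has infinite order, and Lemma 5.10 of \cite{ppar} yields commutativity. You instead give a self-contained argument: normality of $\Gamma_p$ pins down the shape of a loxodromic $\gamma$, discreteness of $W$ forces $a/b$ to be a root of unity of some order $k$, and $\gamma^{k}$ becomes a type III complex homothety, contradicting Corollary \ref{cor_HC3_no_hay_en_no_conmutativos}. The computational core is sound: the map $w\mapsto (a/b)w$ is indeed a bijection of $W$ (you need surjectivity to run the accumulation argument when $\valorabs{a/b}>1$; it follows from two-sided normality of $\Gamma_p$, which holds because conjugates of parabolics are parabolic), the root-of-unity step is correct, and the power identity does kill the $(1,2)$ entry. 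Two caveats on the endgame: Corollary \ref{cor_HC3_no_hay_en_no_conmutativos} requires $\Gamma$ torsion-free and discrete, which are standing hypotheses of the section but not of this lemma's statement, so you should say you are using them; and what it buys you is independence from the external Lemma 5.10 of \cite{ppar}, at the price of a case analysis.

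There are, however, two gaps. First, you never justify the existence of a loxodromic $\gamma$ acting non-trivially on $\Gamma_p$. Non-commutativity only gives some non-commuting pair, and by Observation \ref{obs_los_dos_calculos}(ii) the commutator of two loxodromic elements lands in $\Gamma_p$, so a priori $\Gamma_p$ could be central while $\Gamma$ is non-abelian. You must close that branch: the same entry computation shows that any upper triangular element commuting with an $A_{w_0}$ having $\mu(w_0)^{3}\neq 1$ satisfies $q=r=0$ and $a=b$, and any two such elements commute, so centrality of $\Gamma_p$ does imply commutativity of $\Gamma$ --- but this is a necessary step, not a remark. Second, your fix for the degenerate case does not work as stated. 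If $\mu(W)$ consists of cube roots of unity, then $\Gamma_{W,\mu}$ coincides projectively with $\set{g_{w,0}}{}$, i.e.\ with a group of type (4) of Theorem \ref{thm_descripcion_parte_parabolica}; invariance of the line $\set{x_2=0}$ only forces $r=0$, not $q=0$, and the centralizer of $\set{g_{w,0}}$ in $U_+$ contains non-commuting loxodromic elements (e.g.\ $\text{Diag}(a,a,c)$ and the same matrix with a non-zero $(1,3)$ entry), so the reduction to a non-centralizing $\gamma$ is exactly what fails there. One can still rescue it --- the commutator of two centralizing elements is of the form $g_{0,z}$ and must lie in $\Gamma_p=\set{g_{w,0}}{}$, forcing $z=0$ --- but that computation is missing. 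Note the paper implicitly excludes this regime when it asserts that $\lambda_{23}(\gamma)=\mu(w)^{3}$ has infinite order; the cleaner repair is to observe that the degenerate situation is case (4), handled by Proposition \ref{prop_casos_kmn4_conjugaciones} and Lemmas \ref{lem_kmn4_combinaciones_Gamma3}--\ref{lem_kmn4_combinaciones_Gamma4}, rather than to absorb it here.
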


\begin{proof}
We can assume that $\Gamma_p=\Ell(W,\mu)$, on the other hand, $\valorabs{\mu^{-3}(w)}=1$ for any $w\in W$. Then $\lambda_{23}(\gamma)$ has infinite order for any $\gamma\in\Gamma_p$, it follows from Lemma 4.13 of \cite{barrera2022} that $\Gamma$ is abelian. 
\end{proof}

\subsection{The parabolic part is a torus group}

The following lemma discards subcases 110(2), 111(2), 112(2), 120(2), 121(2), 130(2), 210(2), 211(2), 220(2) and 310(2). 

\begin{lem}\label{lem_casos_kmn2_noexisten}
Let $\Gamma\subset U_+$ be a complex Kleinian group such that $\Gamma_p$ is conjugate to a torus group $\mathcal{T}(W)$ for some discrete group $W$ as in the case (2) of Theorem \ref{thm_descripcion_parte_parabolica}. Then $\Gamma$ cannot contain elements in its third layer.
\end{lem}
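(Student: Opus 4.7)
The plan is to argue by contradiction: assume $\eta\in\Gamma$ lies in the third layer, so a lift of $\eta$ has the form $\left(\begin{smallmatrix}\alpha&x&y\\ 0&\beta&z\\ 0&0&\beta\end{smallmatrix}\right)$ with $\alpha\neq\beta$, $z\neq 0$, and $\mu:=\alpha/\beta$ satisfying $|\mu|\neq 1$ (the latter is automatic since $\alpha\beta^2=1$ forces $|\alpha|\neq|\beta|$ as soon as $\eta$ is loxodromic). Since the set of parabolic elements of $\Gamma$ is conjugation-invariant, $\eta\Gamma_p\eta^{-1}=\Gamma_p=\Gamma_W$. A direct matrix computation yields the key identity
\[
\eta\,h_{a,b}\,\eta^{-1}=h_{(\alpha a+xb)/\beta,\;b},
\]
so the affine map $\phi_\eta(a,b):=\bigl((\alpha a+xb)/\beta,\,b\bigr)$ must preserve $W$ setwise.

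From this I extract structure on $W$ in two steps. First, if $(c,0)\in W$ with $c\neq 0$ then iterating $\phi_\eta^{\pm 1}$ gives $(\mu^n c,0)\in W$ for every $n\in\Z$; since $|\mu|\neq 1$ this orbit accumulates at $(0,0)$, contradicting the discreteness of $W$, so $W\cap(\C\times\{0\})=\{(0,0)\}$. Second, for any $(a,b)\in W$ the difference $\phi_\eta(a,b)-(a,b)=\bigl(((\alpha-\beta)a+xb)/\beta,\,0\bigr)$ lies in $W$ and hence in that trivial intersection, forcing $a=xb/(\beta-\alpha)$. Consequently $W$ is contained in the complex line $L=\{(cb,b):b\in\C\}$ with $c=x/(\beta-\alpha)$.

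To conclude, conjugating $\Gamma_p$ by the upper triangular element $T=\left(\begin{smallmatrix}1&-c&0\\ 0&1&0\\ 0&0&1\end{smallmatrix}\right)\in U_+$ sends each $h_{cb,b}$ to $h_{0,b}$ and reduces $\Gamma_p$ to the group $\{h_{0,b}:b\in B\}$ for some discrete additive subgroup $B\subset\C$. A cyclic permutation of the canonical basis, viewed inside $\PSL$, then conjugates this group to $\{g_{b,0}:b\in B\}$, which is of the form $\Gamma^{\ast}_{W'}$ of case~(4) of Theorem \ref{thm_descripcion_parte_parabolica} with $W'=B\times\{0\}$. Since the canonical forms listed there represent mutually disjoint $\PSL$-conjugacy classes, $\Gamma_p$ cannot be simultaneously of case~(2) and of case~(4), contradicting the hypothesis.

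The main obstacle is justifying this last step cleanly: formally $\{h_{0,b}\}$ can itself be presented as a case~(2) group with a degenerate $W$, so the contradiction relies on reading Theorem \ref{thm_descripcion_parte_parabolica} as providing genuinely distinct canonical forms. A more self-contained alternative, which I would probably prefer in the write-up, is to observe that $\phi_\eta$ restricts to the identity on $L$, so $\eta$ centralizes $\Gamma_p$; by Observation \ref{obs_los_dos_calculos}(ii) the subgroup generated by $\Gamma_p$ and the third layer is then abelian, and in the subcases with no fourth layer this already contradicts the standing non-commutativity of $\Gamma$ in Section \ref{sec_representation}. The remaining subcases, which contain a fourth-layer element $\gamma$ with diagonal $(\alpha'',\beta'',\gamma'')$, are handled by a parallel computation showing that $\gamma$ induces the multiplicative map $b\mapsto(\beta''/\gamma'')b$ on $B$, whose multiplier is forced by the loxodromic/screw classification to violate discreteness of $B$ unless $B=\{0\}$, closing the argument.
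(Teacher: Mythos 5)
Your key identity $\eta h_{a,b}\eta^{-1}=h_{(\alpha a+xb)/\beta,\,b}$ is correct, and your Steps 1 and 2 are sound; in fact Step 1 is exactly the paper's proof. The paper normalizes $\gamma_{12}=0$, $\gamma_{23}=1$, computes $\gamma^k h_{x,y}\gamma^{-k}=h_{\alpha^{-3k}x,\,y}$, and concludes immediately from the accumulation of this sequence that $\Gamma$ is not discrete. Where you diverge is that you (rightly) notice this contraction argument only bites when $W$ contains an element with nonzero first coordinate, and you push on to show that otherwise $W$ lies on the line $L=\set{(cb,b)}$ and $\eta$ centralizes $\Gamma_p$. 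That residual case is real, but it is precisely where your write-up fails to close, so the proposal as it stands does not prove the lemma.

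Concretely: Approach A is not a proof, as you concede --- $\set{h_{0,b}: b\in B}$ is literally $\Gamma_{W'}$ with $W'=\set{0}\times B$, a legitimate instance of case (2), and Theorem \ref{thm_descripcion_parte_parabolica} nowhere asserts that its listed forms are pairwise non-conjugate, so no contradiction arises. Approach B has two gaps. First, ``the subgroup generated by $\Gamma_p$ and the third layer is abelian'' does not follow from Observation \ref{obs_los_dos_calculos}(ii): when $\rank(N_3)=2$ (subcases 120(2), 220(2)) that observation only places $\corchetes{\eta_1,\eta_2}$ in $\Gamma_p$, which a priori yields a nonabelian two-step nilpotent group. (This is fixable: after normalizing $W\subset\set{0}\times\C$ each $\eta_i$ has vanishing $(1,2)$-entry, so $\corchetes{\eta_1,\eta_2}$ is of the form $h_{a,0}$, which is parabolic, hence lies in $\Gamma_p$, hence $a=0$ by your Step 1 --- but this computation must actually be done.) Second, the fourth-layer claim that the multiplier $\beta''/\gamma''$ ``violates discreteness of $B$'' is not automatic: a screw may sit in the fourth layer with its two equal-modulus eigenvalues in positions $2,3$, giving $\valorabs{\beta''/\gamma''}=1$, and then one must separately rule out rational rotation (torsion in $\lambda_{23}(\Gamma)$) and use density of irrational rotation orbits. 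Until these subcases are written out, the lemma is not proved; the paper's own argument avoids all of this by treating only the contracting case.
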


\begin{proof}
Let us assume that $\Gamma_p=\mathcal{T}(W)$ for some additive subgroup $W\subset\C^2$, then $\Gamma_p = \SET{h_{x,y}}{(x,y)\in W}$.
Let us suppose that there exists a loxodromic element $\gamma\in N_3$, 
	$$\gamma=\corchetes{\begin{array}{ccc}
	\alpha^{-2} & \gamma_{12} & \gamma_{13}\\
	0 & \alpha & \gamma_{23}\\
	0 & 0 & \alpha\\
	\end{array}},\; \begin{array}{l}
	\text{for some }\alpha,\gamma_{23}\in\C^\ast,\;\valorabs{\alpha}\neq 1\\
	\gamma_{12},\gamma_{13}\in\C.
	\end{array}
	$$
Conjugating $\Gamma$ by an appropriate element of $\PSL$, we can assume that $\gamma_{12}=0$ and $\gamma_{23}=1$. Assume without loss of generality that $\valorabs{\alpha}>1$, and consider the sequence of distinct elements $\set{\gamma^k h_{x,y}\gamma^{-k}}\subset\Gamma$, it follows that $\gamma^k h_{x,y}\gamma^{-k}\rightarrow h_{\alpha^{-3k}x,y} \in\PSL$, contradicting that $\Gamma$ is discrete.\\
\end{proof}

The following lemma discards subcases 101(2), 102(2) and 103(2).

\begin{lem}\label{lem_casos_kmn2_noexisten_2}
Let $\Gamma\subset U_+$ be a complex Kleinian group such that $\Gamma_p$ is conjugate to a torus group $\mathcal{T}(W)$ for some discrete group $W$ as in the case (2) of Theorem \ref{thm_descripcion_parte_parabolica}. If $\mathcal{T}(W)=\prodint{h_{x,y}}$ with $y\neq 0$, $\Gamma$ contains no loxodromic elements in its fourth layer. 
\end{lem}

\begin{proof}
Assume that $\Gamma_p=\prodint{h=h_{x,y}}$, for some $x,y\in\C$ such that $y\neq 0$. Let us suppose that there exists a loxodromic element $\gamma=\corchetes{\gamma_{ij}}\in N_4$ that, up to conjugation, has the form
	$$\gamma=\corchetes{\begin{array}{ccc}
	\alpha & 0 & \gamma_{13}\\
	0 & \beta & \gamma_{23}\\
	0 & 0 & \alpha^{-1}\beta^{-1}\\
	\end{array}},\;
	\begin{array}{l}
	\alpha,\beta\in\C^\ast\\	
	\alpha\beta^2\neq 1.	
	\end{array}$$
Since $\prodint{h}$ is a normal subgroup of $\prodint{h,\gamma}$, we have that $\gamma h\gamma^{-1}=g^n$ for some $n\in\Z$. Comparing the entries 13 and 23 in the previous equation yields $\alpha^2\beta x = nx$ and $\alpha\beta^2 y = ny$. 

If $x\neq 0$, $y\neq 0$, the equation above implies that $\alpha=\beta$, which in turn implies that $\gamma$ is a complex homothety, contradicting Proposition \ref{cor_HC3_no_hay_en_no_conmutativos}. If $x=0$, $y\neq 0$, the sequence $\tau_k:=\gamma^k h \gamma^{-k} = h_{0,\zeta}$, where $\zeta=(\alpha\beta^2)^k y$, contains a subsequence of distinct elements $\set{\tau_{k_j}}$ such that, either $\tau_{k_j}\rightarrow\id$ or $\tau_{k_j}\rightarrow h$ (depending on whether $\valorabs{\alpha\beta^2}\neq 1$ or $\valorabs{\alpha\beta^2}=1$). 
\end{proof}

In the previous lemma, we omit the case $y=0$ because it will be covered by Lemma \ref{lem_kmn4_combinaciones_Gamma5}. The following lemma describes case 301(2).

\begin{lem}\label{lem_casos_kmn2_unico}
Let $\Gamma\subset U_+$ be a complex Kleinian group such that $\Gamma_p$ is conjugate to a torus group $\mathcal{T}(W)$ for some discrete group $W$ as in the case (2) of Theorem \ref{thm_descripcion_parte_parabolica}. If $r(\Gamma_p)=3$ and $r(N_4)=1$ then one generator of $N_4$ is 
	$$\gamma=\text{Diag}\parentesis{\alpha^2\beta,\alpha\beta^2,1},\;
	\text{with }\alpha\neq 1\text{ and }\alpha\beta^2\nin\R.	
	$$
\end{lem}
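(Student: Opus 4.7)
The argument proceeds in three stages: (1) conjugate $\gamma$ to diagonal form via a $\PSL$-conjugation that keeps the parabolic part of the form $\Gamma_{W'}$; (2) exploit the normality of $\Gamma_p$ in $\Gamma$ to pin down the spectrum of $\gamma$; (3) recognize $\Gamma$ as an Inoue fundamental group. The main obstacle is the numerical case analysis forcing $\alpha\neq 1$, together with matching the present normalization to Inoue's; the remainder is routine manipulation of the semidirect product.

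\emph{Stage 1.} Let $\gamma$ generate $N_4$ and fix a lift in $\SL$ with diagonal entries $\alpha,\beta,\delta$ satisfying $\alpha\beta\delta=1$. The fourth-layer condition gives $\beta\neq\delta$, and Corollary \ref{cor_HC3_no_hay_en_no_conmutativos} gives $\alpha\neq\beta$ and $\alpha\neq\delta$ (the latter because a coordinate transposition would turn $\gamma$ into a type III complex homothety). Conjugation of $\Gamma$ by a suitable $h_{u,v}$ commutes with every element of $\Gamma_W$ and so preserves $\Gamma_p$; using $\alpha\neq\delta$ and $\beta\neq\delta$ we can choose $u,v$ to annihilate the $(1,3)$ and $(2,3)$ entries of $\gamma$. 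A subsequent conjugation by $\tau=I+sE_{12}$ sends $h_{x,y}$ to $h_{x+sy,y}$, so $\Gamma_W$ becomes $\Gamma_{W'}$ with $W'=\{(x+sy,y):(x,y)\in W\}$ again of rank 3; choosing $s=a/(\alpha-\beta)$ (with $a$ the current $(1,2)$-entry of $\gamma$) diagonalizes $\gamma$, giving $\gamma=\text{Diag}(\alpha,\beta,(\alpha\beta)^{-1})=\text{Diag}(\alpha^2\beta,\alpha\beta^2,1)\in\PSL$.

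\emph{Stage 2.} A direct computation yields $\gamma h_{x,y}\gamma^{-1}=h_{\alpha^2\beta\,x,\,\alpha\beta^2\,y}$; by Observation \ref{obs_los_dos_calculos}(i) the rank-3 additive group $W'$ is invariant under $T=\text{Diag}(\alpha^2\beta,\alpha\beta^2)\colon\C^2\to\C^2$, so the real 3-plane $V=W'\otimes_\Z\R\subset\R^4$ is $T$-invariant. Since multiplication by a non-real complex number on a $\C$-summand of $\R^2\oplus\R^2$ admits no proper $\R$-invariant subspace, a 3-dimensional $T$-invariant real subspace exists only when exactly one of $\alpha^2\beta,\alpha\beta^2$ is real; the case of the statement requires $\alpha^2\beta\in\R$ and $\alpha\beta^2\notin\R$. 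The restriction of $T$ to $W'\cong\Z^3$ is an integer matrix $M$ with integer inverse, so $\det M=\alpha^2\beta\cdot\valorabs{\alpha\beta^2}^2=\pm 1$. Supposing $\alpha=1$, then $T=\text{Diag}(\beta,\beta^2)$; the reality constraint forces $\beta\in\R\cup i\R$, and in both cases a short computation yields $\valorabs{\det M}=\valorabs{\beta}^k$ for some positive integer $k$, forcing $\valorabs{\beta}=1$ and contradicting the loxodromicity of $\gamma$. Hence $\alpha\neq 1$.

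\emph{Stage 3.} By Theorem \ref{thm_descomposicion_caso_noconmutativo}, $\Gamma=\Gamma_p\rtimes\prodint{\gamma}\cong\Z^3\rtimes_M\Z$ with $M\in\mathrm{GL}(3,\Z)$ of determinant $\pm 1$ and spectrum $\{\alpha^2\beta,\alpha\beta^2,\overline{\alpha\beta^2}\}$: one real eigenvalue together with a complex-conjugate pair, satisfying the modulus relation $\alpha^2\beta\cdot\valorabs{\alpha\beta^2}^2=\pm 1$. This is exactly the presentation of the fundamental group of an Inoue surface of type $S_M$; the explicit actions $h_{x,y}\colon(z_1,z_2)\mapsto(z_1+x,z_2+y)$ and $\gamma\colon(z_1,z_2)\mapsto(\alpha^2\beta\,z_1,\alpha\beta^2\,z_2)$ on the affine chart $\{z_3=1\}$ of $\CP^2$ realize Inoue's standard model on $\Hh\times\C$, after a diagonal rescaling aligning the $T$-invariant real line of the first $\C$-factor with $\R$ (and, if necessary, passing to $\gamma^2$ to ensure $\alpha^2\beta>1$).
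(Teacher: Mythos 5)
Your overall strategy coincides with the paper's: after diagonalizing $\gamma$, normality of $\Gamma_p$ makes $\gamma$ act on the rank-$3$ lattice $W'$ by a matrix $M\in\mathrm{GL}(3,\Z)$ whose eigenvalues include $\alpha^2\beta$ and $\alpha\beta^2$, and the conclusion is read off from the Inoue-surface classification (the paper cites Theorem 4.4 of \cite{cs2014} for this last step, which is safer than your sketched re-derivation). However, there is a genuine gap at the one step where the paper says only ``a direct calculation shows that $\alpha\neq 1$ and $\alpha\beta^2\nin\R$'' and you try to supply the calculation. Your dichotomy ``a 3-dimensional $T$-invariant real subspace exists only when exactly one of $\alpha^2\beta,\alpha\beta^2$ is real'' is false as stated: if \emph{both} are real, $T=\lambda_1 I_2\oplus\lambda_2 I_2$ on $\R^2\oplus\R^2$ certainly admits $3$-dimensional invariant subspaces (e.g.\ $\R^2\oplus\ell$). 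The correct statement is ``at least one is real,'' and the both-real case — which is exactly the negation of the lemma's claim $\alpha\beta^2\nin\R$ — is never excluded. The phrase ``the case of the statement requires $\alpha^2\beta\in\R$ and $\alpha\beta^2\notin\R$'' assumes what is to be proved. Since your argument for $\alpha\neq 1$ is then run inside this unproved dichotomy, it inherits the gap.

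The gap is fillable along your own lines: if both eigenvalues were real, the three eigenvalues of $M$ would form a sub-multiset of $\set{\lambda_1,\lambda_1,\lambda_2,\lambda_2}$, hence $M$ would have a repeated real eigenvalue; a repeated root of a monic integer cubic is rational, hence an integer, and $\det M=\pm1$ then forces all eigenvalues to be $\pm1$, so the lift of $\gamma$ would have only unit-modulus eigenvalues, contradicting loxodromy. You should add this (or an equivalent) argument. Two smaller points: your appeal to Corollary \ref{cor_HC3_no_hay_en_no_conmutativos} to get $\alpha\neq\beta$ in Stage 1 only rules out the diagonalizable case — if $\alpha=\beta$ with a nonzero $(1,2)$-entry, $\gamma$ is loxoparabolic and cannot be diagonalized, so that case needs a separate (normality/discreteness) argument; and in Stage 3, passing to $\gamma^2$ replaces $\Gamma$ by a proper subgroup, so it cannot be used to normalize the sign of the real eigenvalue without further comment.
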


\begin{proof}
One can assume that $\Gamma_p=\mathcal{T}(W)$. Take three generators of $\mathcal{T}(W)$ denoted by $h_i=h_{x_i,y_i}$, $i=1,2,3$. Let $\gamma\in N_4$ be a loxodromic generator of $N_4$, up to a suitable conjugation, one can assume that $\gamma=\text{Diag}\parentesis{\alpha^2\beta,\alpha\beta^2,1}$, for some $\alpha,\beta\in\C^\ast$. Since $\Gamma_p$ is normal in $\Gamma$ we have that $\gamma h_i\gamma^{-1}=h_1^{k_{i,1}}h_2^{k_{i,2}}h_3^{k_{i,3}}$ for some $k_{i,j}\in\Z$, $i=1,2,3$. Let us define $A=\corchetes{k_{i,j}}\subset \text{PSL}(3,\Z)$, then
	$$A\corchetes{\begin{array}{c}
	x_1\\
	x_2\\
	x_3\\
	\end{array}}=\alpha^2\beta\corchetes{\begin{array}{c}
	x_1\\
	x_2\\
	x_3\\
	\end{array}},\;\;\;\;A\corchetes{\begin{array}{c}
	y_1\\
	y_2\\
	y_3\\
	\end{array}}=\alpha\beta^2\corchetes{\begin{array}{c}
	y_1\\
	y_2\\
	y_3\\
	\end{array}}.$$
This means that $\alpha^2\beta$ and $\alpha\beta^2$ are eigenvalues of $A$, with respective eigenvectors $\corchetes{x_1:x_2:x_3}$ and $\corchetes{y_1:y_2:y_3}$. A direct calculation shows that $\alpha\neq 1$ and $\alpha\beta^2\nin\R$. Using (1) of Theorem 4.4 of \cite{cs2014} we conclude the proof. 
\end{proof}

\subsection{The parabolic part is an abelian Kodaira group}

The following lemma discards all subcases kmn(3).

\begin{lem}\label{lem_Gamma_RLW_no_tiene_lox}
Let $\Gamma\subset U_+$ be a complex Kleinian group such that $\Gamma_P$ is conjugate to an abelian Kodaira group $\text{Kod}_0(W,R,L)$ for some $R,L,W$ as in the case (3) of Theorem \ref{thm_descripcion_parte_parabolica}. Then $\Gamma$ cannot contain loxodromic elements.
\end{lem}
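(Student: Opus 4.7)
My plan is to argue by contradiction. Suppose $\Gamma$ contains a loxodromic element $\gamma$. By the normality built into the decomposition of Theorem \ref{thm_descomposicion_caso_noconmutativo} (see item (i) of Observation \ref{obs_los_dos_calculos}), conjugation by $\gamma$ preserves $\Gamma_p$. After conjugating I may assume $\Gamma_p=\Gamma_{R,L,W}$ and write
$$\gamma=\corchetes{\begin{array}{ccc}\alpha & p & q\\ 0 & \beta & r\\ 0 & 0 & \delta\end{array}},\quad \alpha\beta\delta=1.$$
A direct matrix computation shows that $\gamma g_{x,w}\gamma^{-1}$ has $(1,2)$-entry $x\alpha/\beta$ and $(2,3)$-entry $x\beta/\delta$. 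Since every element of $\Gamma_{R,L,W}$ has equal $(1,2)$- and $(2,3)$-entries, normality forces $x\alpha/\beta=x\beta/\delta$ for every $x\in R$. The subcase $R=\set{0}$ is degenerate, as then $\Gamma_p\subset\core(\Gamma)$ and falls under case $(4)$ rather than $(3)$, so I assume $R\neq\set{0}$. Then $\alpha\delta=\beta^2$, which together with $\alpha\beta\delta=1$ yields $\beta^3=1$, hence $\valorabs{\beta}=1$. Since $\gamma$ is loxodromic, at least one of $\valorabs{\alpha},\valorabs{\delta}$ differs from $1$; setting $\lambda:=\alpha/\beta=\beta/\delta$ one obtains $\valorabs{\lambda}\neq 1$. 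The same formula also gives $\gamma g_{x,w}\gamma^{-1}=g_{\lambda x,w'}$, and applying the analogous computation to $\gamma^{-1}$ yields $\lambda R=R$.

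I would then split into two cases. If $R$ is discrete, I pick any $r_0\in R\setminus\set{0}$; the orbit $\SET{\lambda^n r_0}{n\in\Z}\subset R$ is infinite with $\lambda^n r_0\to 0$ as $n\to -\infty$ or $n\to +\infty$ according to whether $\valorabs{\lambda}>1$ or $<1$, contradicting the discreteness of $R$. If $R$ is not discrete, then after possibly replacing $\gamma$ by $\gamma^{-1}$ I may assume $\valorabs{\lambda}>1$. Iterating the conjugation gives $\gamma^{-n}g_{r_0,0}\gamma^{n}=g_{\lambda^{-n}r_0,w_n}\in\Gamma_p$ with $w_n\in W$. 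Extracting the transformation rule for the $(1,3)$-entry from the full conjugation formula produces a linear recurrence of the form $v_{n+1}=\lambda^{-2}v_n+A\lambda^{-n}$, where $v_n$ is the $(1,3)$-entry at step $n$ and $A$ is a constant depending only on $\gamma$ and $r_0$. Since $\valorabs{\lambda}>1$, this forces $v_n\to 0$. But by the form of elements of $\Gamma_{R,L,W}$ we have $v_n=L(\lambda^{-n}r_0)+(\lambda^{-n}r_0)^2/2+w_n$, so the sequence $x_n:=\lambda^{-n}r_0$ tends to $0$ in $R$ with $L(x_n)+w_n\to 0$. This directly contradicts the defining property of case $(3)$ in the non-discrete setting of Theorem \ref{thm_descripcion_parte_parabolica}, namely that $L(x_n)+w_n\to\infty$ for every sequence $x_n\to 0$ in $R$ and every $\set{w_n}\subset W$.

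The main obstacle is making the transformation rule for the $(1,3)$-entry precise enough to set up the recurrence: the $(1,2)$- and $(2,3)$-entries depend only on the diagonal of $\gamma$ and are immediate, but the $(1,3)$-entry also involves the off-diagonal entries $p$ and $r$ of $\gamma$, which is where the constant $A$ in the recurrence comes from and which is what ultimately allows me to extract $v_n\to 0$ and conclude.
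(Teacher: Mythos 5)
Your proof is correct and rests on the same mechanism as the paper's: normality of $\Gamma_p$ together with the fact that every element of $\Gamma_{R,L,W}$ has equal $(1,2)$- and $(2,3)$-entries, so conjugating by an upper triangular $\gamma$ with diagonal $(\alpha,\beta,\delta)$ and comparing those two entries constrains the diagonal. But you go further than the paper, and usefully so. The paper splits by layer: for $\gamma\in N_3$ the two scaling factors are $\alpha^{-3}$ and $1$, and their equality contradicts $\valorabs{\alpha}\neq 1$ outright; for $\gamma\in N_4$ the paper asserts that $\gamma g\gamma^{-1}\in\Gamma_p$ forces $\alpha\beta^{-1}=1$ \emph{and} $\alpha\beta^2=1$ and stops at the resulting contradiction. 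As your computation makes clear, membership in $\Gamma_{R,L,W}$ only yields $\alpha\beta^{-1}x=\alpha\beta^2x$, i.e.\ $\beta^3=1$; the residual possibility of a common scaling $\lambda=\alpha/\beta=\beta/\delta$ with $\valorabs{\lambda}=\valorabs{\alpha}\neq 1$ (e.g.\ $\beta=1$, $\alpha=2$) is not excluded by that algebra alone. Your orbit argument closes exactly this case: $\lambda R=R$ contradicts discreteness of $R$ when $R$ is discrete, and when $R$ is not discrete the recurrence $v_{n+1}=\lambda^{-2}v_n+A\lambda^{-n}$ for the $(1,3)$-entry (whose coefficient $\delta/\alpha=\lambda^{-2}$ and inhomogeneous term proportional to the $(1,2)$-entry $\lambda^{-n}r_0$ I have checked) forces $L(x_n)+w_n\to 0$ along $x_n=\lambda^{-n}r_0\to 0$, against the divergence hypothesis of case (3). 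The only loose end is the degenerate case $R=\set{0}$, which you dispose of the same way the paper implicitly does (it is not genuinely an instance of case (3)). In short, your version costs an extra recurrence computation but buys a complete treatment of the fourth layer, whereas the paper's $N_4$ step, as written, does not follow from the displayed computation alone.
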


\begin{proof}
Assume that $\Gamma_p=\text{Kod}_0(W,R,L)$. Let us suppose that $\Gamma$ contain a loxodromic element $\gamma\in\Gamma$. If $\gamma\in N_3$ then
	$$\gamma=\corchetes{\begin{array}{ccc}
	\alpha^{-2} & \gamma_{12} & \gamma_{13}\\
	0 & \alpha & \gamma_{23}\\
	0 & 0 & \alpha\\
	\end{array}},\;
	\begin{array}{l}
	\text{for some }\alpha\in\C^\ast,\;	\valorabs{\alpha}\neq 1\\
	\gamma_{12},\gamma_{13}\in\C\text{ and }\gamma_{23}\in\C^\ast.
	\end{array}$$
Since $\Gamma_p$ is a normal subgroup of $\Gamma$, we have $\gamma g \gamma^{-1}\in\Gamma_p$ for any $g\in\Gamma_p$. If 
	\begin{equation}\label{eq_dem_lem_Gamma_RLW_no_tiene_lox_1}		
	g=\corchetes{\begin{array}{ccc}
	1 & x & L(x)+\frac{x^{2}}{2}+w\\
	0 & 1 & x\\
	0 & 0 & 1\\
	\end{array}},\text{ then }
	\end{equation}
	$$\gamma g\gamma^{-1}=\corchetes{\begin{array}{ccc}
	1 & \alpha^{-3}x & \alpha\parentesis{L(x)+\frac{x^{2}}{2}+w}-\frac{\gamma_{23}x}{1-\alpha^3}\\
	0 & 1 & x\\
	0 & 0 & 1\\
	\end{array}},$$
and, since $\gamma g \gamma^{-1}\in\Gamma_p$, then $\alpha^{-3}x=x$, which is impossible, given that $\valorabs{\alpha}\neq 1$. Now, if $\gamma\in N_4$, then
	$$\gamma=\corchetes{\begin{array}{ccc}
	\alpha & \gamma_{12} & \gamma_{13}\\
	0 & \beta & \gamma_{23}\\
	0 & 0 & \alpha^{-1}\beta^{-1}\\
	\end{array}}$$
with $\alpha,\beta\in\C^\ast$, $\alpha^{-1}\beta^{-1}\neq \beta$ (or equivalently, $\alpha\beta^{2}\neq 1$). Again, since $\Gamma_p$ is a normal subgroup of $\Gamma$, we have $\gamma g \gamma^{-1}\in\Gamma_p$ for any $g\in\Gamma_p$. If $g$ is given by (\ref{eq_dem_lem_Gamma_RLW_no_tiene_lox_1}), then
	$$\gamma g\gamma^{-1}=\corchetes{\begin{array}{ccc}
	1 & \alpha\beta^{-1}x & \alpha^{2}\beta\parentesis{L(x)+\frac{x^{2}}{2}+w}+\alpha\gamma_{23}x(\beta -\alpha)\\
	0 & 1 & \alpha\beta^{2}x\\
	0 & 0 & 1\\
	\end{array}},$$
and, since $\gamma g \gamma^{-1}\in\Gamma_p$, then $\alpha\beta^{-1}=1$ and $\alpha\beta^{2}=1$, but this last condition is a contradiction. This contradictions prove the lemma. 
\end{proof}

\subsection{The parabolic part is a dual torus group}

Now we deal with subcases kmn(4). The following proposition describes the form of $\Gamma_p$, up to conjugation. Its proof is straight-forward, and its based on Lemma 2.3 and 2.5 of \cite{barrera2022}. 

\begin{prop}\label{prop_casos_kmn4_conjugaciones}
Let $\Gamma\subset U_+$ be a complex Kleinian group such that $\Gamma_p$ is conjugate to a dual torus group $\mathcal{T}^{\ast}(W)$ for some discrete additive subgroup $W\subset\C^2$, as in (4) of Theorem \ref{thm_descripcion_parte_parabolica}. Then $\Gamma_p$ is conjugate to one of the following groups:

\begin{multicols}{2}
\begin{enumerate}[(i)]
\item $\Gamma_1=\prodint{g_{1,0},g_{0,1}}$.
\item $\Gamma_2=\prodint{g_{0,1},g_{0,y}}$, with $y\nin\R$.
\item $\Gamma_3=\prodint{g_{1,0},g_{x,0}}$, with $x\nin\R$.
\item $\Gamma_4=\prodint{g_{1,0}}$.
\item $\Gamma_5=\prodint{g_{0,1}}$.
\end{enumerate}
\end{multicols}

\end{prop}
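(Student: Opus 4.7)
The plan is to identify $\Gamma_p$ with the discrete additive subgroup $W\subset\C^2$ via the correspondence $g_{x,y}\leftrightarrow(x,y)$ and then normalize $W$ using the $U_+$-conjugations that act nontrivially on $\core(\Gamma)$. A direct computation shows that $g_{u,v}$ and $h_{u,0}$ commute with every element of $\core(\Gamma)$, so the only nontrivial conjugations come from the diagonal torus $\text{Diag}(a,b,c)$, acting as $(x,y)\mapsto\parentesis{(a/b)x,(a/c)y}$, and from the shears $h_{0,z}$, acting as $(x,y)\mapsto(x,y-zx)$. Composing, these realize a lower-triangular subgroup of $\gL$ acting on $W$, and the proposition amounts to classifying $\Z$-bases of $W$ up to this action.

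From here I would case-split on $\rank(W)\in\set{1,2}$. For $\rank(W)=1$, write $W=\Z(x_0,y_0)$: if $x_0=0$ a single diagonal scaling yields $\Gamma_5$; otherwise the shear $z=y_0/x_0$ kills the second coordinate and a further diagonal scaling produces $\Gamma_4$.

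For $\rank(W)=2$, pick $\R$-linearly independent generators $v_1,v_2$. If $W\subset\set{0}\times\C$, diagonal scaling normalizes to $\Gamma_2=\prodint{g_{0,1},g_{0,y_2/y_1}}$, with $y_2/y_1\nin\R$ forced by the $\R$-linear independence; the symmetric case $W\subset\C\times\set{0}$ yields $\Gamma_3$. Otherwise, some generator has nonzero first coordinate (say $x_1\neq 0$), and I shear with $z=y_1/x_1$ and scale to put $v_1=(1,0)$; the transformed $v_2$ has nonzero second coordinate (else we fall back into the $W\subset\C\times\set{0}$ case already handled), so a final rescaling of the $y$-axis puts $v_2=(\alpha,1)$ for some $\alpha\in\C$. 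The remaining task is to eliminate $\alpha$ by combining integer basis changes $v_2\mapsto v_2-nv_1$ with the residual scalings that fix $v_1$, and --- crucially --- the arithmetic constraints imposed by some normalizing loxodromic element in $\Gamma$ (without which $\Gamma$ would be purely parabolic, a case already treated in \cite{ppar}), thereby reducing $W$ to $\Gamma_1$.

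I expect the main obstacle to be exactly this last elimination of $\alpha$: under $B_-$-conjugation combined with $GL(2,\Z)$-basis changes, the parameter only drops modulo $\Z$, so the non-commutativity of $\Gamma$ (through Observation \ref{obs_los_dos_calculos}(ii) and the normality $\gamma\,\core(\Gamma)\,\gamma^{-1}\subset\core(\Gamma)$ for any loxodromic $\gamma\in\Gamma$) must be used to force $\alpha$ into a class equivalent to $0$. The remaining steps are mechanical normal-form calculations; all of the genuine content of the argument is concentrated in this last reduction, which follows the detailed bookkeeping carried out in \cite{mitesis}.
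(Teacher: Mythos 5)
There is a genuine gap, and it sits exactly where you locate ``all of the genuine content'': the elimination of the parameter $\alpha$ in the rank-two case. The difficulty is self-inflicted. You restrict to conjugations lying in $U_+$, which (as you correctly compute) only realize the Borel subgroup of $\gL$ acting on $W\subset\C^2$: independent scalings of the two coordinates together with the shear $(x,y)\mapsto(x,y-zx)$. The opposite shear $(x,y)\mapsto(x-wy,y)$ is not available, and a lattice such as $W=\Z(1,0)+\Z(i,1)$ genuinely cannot be carried to $\Z(1,0)+\Z(0,1)$ by your group: any such element sends $(1,0)$ to $(a,-z)$, forcing $a\in\Z$, and then sends $(i,1)$ to a vector with first coordinate $ia\notin\Z$ unless $a=0$. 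The proposition, however, only asserts conjugacy in $\PSL$, and the paper conjugates by block matrices with a $1\times 1$ block on $\C e_1$ and a full $2\times 2$ block on the span of $e_2,e_3$ (note the nonzero $(3,2)$ entry $x_2/\sqrt{x_1}$ in its matrix $A_1$). Such matrices still preserve the shape $g_{x,y}$ and realize the entire $\gL$-action $(x,y)\mapsto a\,(x,y)M^{-1}$ on $W$; with that, any pair of $\C$-linearly independent generators is sent to $(1,0),(0,1)$ in one step, and the whole proof reduces to the elementary four-way case split you describe, with no arithmetic input whatsoever.

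Your proposed rescue --- forcing $\alpha$ into the class of $0$ via non-commutativity of $\Gamma$, Observation \ref{obs_los_dos_calculos}(ii) and normality of $\core(\Gamma)$ under a loxodromic element --- cannot work as stated. The proposition is a statement about $\Gamma_p$ alone and must hold for every admissible $W$ of rank at most $2$; nothing guarantees a loxodromic element whose normalization constraints would push $\alpha$ into $\Z$, and even when such elements exist the constraints they impose (integrality of eigenvalue ratios relative to $W$, as in Lemmas \ref{lem_kmn4_combinaciones_Gamma1}--\ref{lem_kmn4_combinaciones_Gamma5}) restrict the loxodromic part, not the position of a basis of $W$ modulo the Borel orbit. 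Replace your $U_+$-normalization by conjugation with the paper's explicit block elements $A_1,\dots,A_5$ (or any element realizing the missing shear on $(x,y)$) and the argument closes without any appeal to \cite{mitesis}.
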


In the context of Lemma 2.3 and 2.5 of \cite{barrera2022}, $\Gamma_1$ is a dual torus group of type I, and the rest are dual torus group of type II.

\medskip

The following lemmas study the different combinations of $\Gamma_p$ and elements of the third and fourth layer of $\Gamma$. In all of these lemmas, $\Gamma$ is a discrete subgroup of $U_+$, and $\Gamma_p$ denotes the parabolic part of $\Gamma$.\\

\begin{lem}\label{lem_kmn4_combinaciones_Gamma1}
Suppose that $\Gamma_p$ is conjugate to $\Gamma_1=\prodint{g_{1,0},g_{0,1}}$.
	\begin{enumerate}[(i)]
	\item If $\rank(N_3)=1$, one generator of $N_3$ is given by
		\begin{equation}\label{eq_lem_kmn4_combinaciones_Gamma1_2}
		\gamma=\corchetes{\begin{array}{ccc}
	p & \gamma_{12} & \gamma_{13}\\
	0 & 1 & \frac{q}{p}\\
	0 & 0 & 1\\
	\end{array}},\;
		\begin{array}{l}
		\text{ for }p\in\Z\setminus\set{-1,0,1}\text{, }q\in\Z\setminus\set{0}\\
		\gamma_{12},\gamma_{13}\in\C.			
		\end{array}
		\end{equation}
	\item If $\rank(N_3)=2$, one generator of $N_3$ is given by (\ref{eq_lem_kmn4_combinaciones_Gamma1_2}), and the other is given by
		\begin{equation}\label{eq_lem_kmn4_combinaciones_Gamma1_4}\mu=\corchetes{\begin{array}{ccc}
	p_2 & \mu_{12} & \mu_{13}\\
	0 & 1 & \frac{q_2}{p_2}\\
	0 & 0 & 1\\
	\end{array}},
	\end{equation}
	with $p_2\in\Z\setminus\set{-1,0,1}$, $q_2\in\Z\setminus\set{0}$, $\mu_{12},\mu_{13}\in\Q\setminus\set{0}$ and
		\begin{align*}
		\mu_{12} &= -\frac{p_1 p_2(m+jp_1p_2)}{(1-p_1)(p_1q_2+p_2q_1)}\\
		\mu_{13} &= \frac{p_2\parentesis{mq_1+jp_1^2(p_2q_1-(1-p_1)q_2)}}{(1-p_1)^2(p_1q_2+p_2q_1)}
		\end{align*}
		where $j,m\in\Z$ are integers such that $p_1 p_2+p_1q_2 \mid m+jp_1p_2$ and $p_1 p_2+p_1q_2 \mid mp_1q_2 -jp_1p_2^2q_1$.
		
	\item If $\rank(N_3)=0$ and $\rank(N_4)=1$, one generator of $N_4$ is given by
		\begin{equation}\label{eq_lem_kmn4_combinaciones_Gamma1_3}
		\gamma=\corchetes{\begin{array}{ccc}
	pq & \gamma_{12} & \gamma_{13}\\
	0 & q & r\\
	0 & 0 & p\\
	\end{array}},
		\begin{array}{l}
		\text{ for }p,q\in\Z\setminus\set{0}\text{ such that }\\
		\gamma\text{ is loxodromic,  }\\
		r\in\Z,\;\gamma_{12},\gamma_{13}\in\C.		
		\end{array}				
		\end{equation}
	\item It's not possible to have $\rank(N_4)=2$.
	\item If $\rank(N_3)=1$, then there are no elements in the fourth layer $N_4$.
	\end{enumerate}
\end{lem}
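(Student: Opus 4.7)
The proof of all five parts rests on the same conjugation formula: for any upper-triangular $\mu\in\Gamma$ with diagonal $(\alpha,\beta,\delta)$ and $(2,3)$-entry $c$, a direct calculation gives
\[
\mu g_{x,y}\mu^{-1}=g_{\alpha x/\beta,\;\alpha y/\delta-\alpha c x/(\beta\delta)},
\]
so the normality of $\Gamma_p=\langle g_{1,0},g_{0,1}\rangle$ in $\Gamma$ forces the quantities $\alpha/\beta$, $\alpha/\delta$ and $\alpha c/(\beta\delta)$ to take integer values on the lattice generators. For (i), the third-layer condition $\beta=\delta$ reduces these to $\alpha/\beta\in\Z$ and $\alpha c/\beta^2\in\Z$; scaling projectively so that $\beta=1$ and writing $p:=\alpha/\beta$, $q:=\alpha c/\beta^2$ produces the form (\ref{eq_lem_kmn4_combinaciones_Gamma1_2}), with $p\notin\{-1,0,1\}$ forced by loxodromicity together with $\alpha\neq\beta$, and $q\neq 0$ by the third-layer requirement that the $(2,3)$-entry be nonzero. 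For (iii), the fourth-layer condition $\beta\neq\delta$ keeps $\alpha/\beta$ and $\alpha/\delta$ as two independent integers; renaming $p:=\alpha/\beta$, $q:=\alpha/\delta$, $r:=\alpha c/(\beta\delta)$ and scaling projectively by $\alpha/(\beta\delta)$ puts $\gamma$ in the form (\ref{eq_lem_kmn4_combinaciones_Gamma1_3}).

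For (ii), two third-layer generators $\gamma,\mu$ of the form (\ref{eq_lem_kmn4_combinaciones_Gamma1_2}) with parameters $(p_i,q_i,\gamma_{ij})$ satisfy $[\gamma,\mu]\in\Gamma_p$ by Observation \ref{obs_los_dos_calculos}(ii). Expanding $\gamma\mu\gamma^{-1}\mu^{-1}$ and using $\Gamma_p\subset\core(\Gamma)$ forces its $(2,3)$-entry to vanish, giving one algebraic relation among the four parameters; requiring the $(1,2)$- and $(1,3)$-entries to be integers $m$ and $j$ produces two more equations. Treating these as a linear system in the unknowns $(\mu_{12},\mu_{13})$ --- absorbing the $(\gamma_{12},\gamma_{13})$ into $m,j$ by an appropriate inner conjugation --- yields the closed-form solutions displayed. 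The divisibility conditions $p_1p_2+p_1q_2\mid m+jp_1p_2$ and $p_1p_2+p_1q_2\mid mp_1q_2-jp_1p_2^2q_1$ appear precisely as the integrality requirements on the numerators of $\mu_{12}$ and $\mu_{13}$.

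For (v), take $\gamma\in N_3$ of the form (\ref{eq_lem_kmn4_combinaciones_Gamma1_2}) --- so its $(2,3)$-entry $w:=q/p$ is nonzero --- and suppose a hypothetical $\mu\in N_4$ has diagonal $(\alpha,\beta,\delta)$ with $\beta\neq\delta$ and $(2,3)$-entry $c$. From Observation \ref{obs_los_dos_calculos}(i) and the diagonal projection (using $p\notin\{-1,0,1\}$ to rule out higher powers), $\mu\gamma\mu^{-1}=g\gamma$ for some $g\in\Gamma_p$; comparing the $(2,3)$-entries on both sides reduces to $w(\beta-\delta)=0$, contradicting $\beta\neq\delta$. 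Part (iv) proceeds similarly: two fourth-layer generators $\mu_1,\mu_2$ of the form (iii) produce, via the analogous commutator analysis, the vanishing condition $r_1(p_2-q_2)=r_2(p_1-q_1)$ on the $(2,3)$-entry of $[\mu_2,\mu_1]$, and imposing in addition the integrality of the $(1,2)$- and $(1,3)$-entries of this commutator together with the individual constraints of (iii) and the loxodromicity of each $\mu_i$ admits no joint solution. The main obstacle is the commutator computation in (ii): the algebra is lengthy, and extracting exactly the two divisibility conditions displayed requires careful tracking of denominators across an expression built from four independent parameters; the arguments for (iv) and (v) then follow by identifying which specific matrix entry of the relevant commutator becomes obstructive under the fourth-layer hypothesis.
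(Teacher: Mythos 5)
Your conjugation formula $\mu g_{x,y}\mu^{-1}=g_{\alpha x/\beta,\;\alpha y/\delta-\alpha cx/(\beta\delta)}$ is correct and is exactly the computation the paper uses for (i) and (iii); your argument for (v) (compare $(2,3)$-entries of $\mu\gamma\mu^{-1}$ and $g\gamma$ to get $w(\beta-\delta)=0$) is also sound and matches the paper's use of $[\gamma,\mu]\in\Gamma_p$. The problem is part (iv), where you have not given a proof. You claim that the vanishing of the $(2,3)$-entry of $[\mu_2,\mu_1]$, the integrality of its $(1,2)$- and $(1,3)$-entries, the constraints of (iii), and loxodromicity ``admit no joint solution.'' But they do: two commuting diagonal elements of the form (\ref{eq_lem_kmn4_combinaciones_Gamma1_3}) with $r_1=r_2=0$ (say $\text{Diag}(p_1q_1,q_1,p_1)$ and $\text{Diag}(p_2q_2,q_2,p_2)$) satisfy every condition you list, since their commutator is the identity. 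So the obstruction cannot come from the commutator being in $\Gamma_p$. The paper's proof of (iv) uses a strictly stronger input, namely the normality of $\Gamma_p\rtimes\prodint{\gamma}$ in $\Gamma$ coming from the layered decomposition of Theorem \ref{thm_descomposicion_caso_noconmutativo} (Observation \ref{obs_los_dos_calculos}(i)): writing $\mu\gamma\mu^{-1}$ as an element of $\Gamma_p\rtimes\prodint{\gamma}$ and comparing $(1,2)$-entries forces $q_2=-1$, whence $\lambda_{13}(\mu)$ is a torsion element of a torsion-free group. Without invoking this normality (or some replacement for it, e.g.\ a discreteness argument), the nonexistence claim in (iv) is unsupported.

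A secondary issue is part (ii). You assert that the vanishing of the $(2,3)$-entry of $[\gamma,\mu]$ ``gives one algebraic relation among the four parameters,'' but for two third-layer generators the lower-right $2\times2$ blocks are commuting unipotent matrices $\corchetes{\begin{smallmatrix}1&q_i/p_i\\0&1\end{smallmatrix}}$, so that entry vanishes identically and yields no constraint. The two integrality conditions from $[\gamma,\mu]\in\Gamma_p$ alone do not produce the displayed formulas for $\mu_{12},\mu_{13}$; the paper's system consists of the $(1,3)$-entry of the commutator together with the $(1,2)$- and $(1,3)$-entries of $\mu\gamma\mu^{-1}\in\Gamma_p\rtimes\prodint{\gamma}$ --- again the normality of the intermediate layer is the essential, and in your write-up missing, ingredient. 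Parts (i), (iii) and (v) stand; (ii) needs the correct system of equations, and (iv) needs an actual argument.
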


\begin{proof}
We can assume that $\Gamma_p=\Gamma_1$. Let $g\in\Gamma_p$ be an element with the form $g=g_{n,m}$ for some $n,m\in\Z$. We prove each conclusion separately:
	\begin{enumerate}[(i)]
	\item Let $\gamma\in N_3$ be a generator of the third layer of $\Gamma$. Then
	$$\gamma=\corchetes{\begin{array}{ccc}
	\alpha^{-2} & \gamma_{12} & \gamma_{13}\\
	0 & \alpha & \gamma_{23}\\
	0 & 0 & \alpha\\
	\end{array}}\text{, for some }\gamma_{23},\alpha\in\C^\ast,\;\valorabs{\alpha}\neq 1.$$
	Since $\Gamma_p$ is a normal subgroup of $\Gamma$, then $\gamma g \gamma^{-1}\in\Gamma_p$, which means that $\alpha^{-3}n,\alpha^{-4}(m\alpha-n\gamma_{23})\in\Z$ for any $n,m\in\Z$ (see (i) of Lemma \ref{obs_los_dos_calculos}). In particular, for $n=0,m=1$ we get $\alpha^{-3}=p$ for some $p\in\Z\setminus\set{0}$, for $n=1,m=0$ we get $\gamma_{23}=q p^{-\frac{4}{3}}$ for some $q\in\Z\setminus\set{0}$. Then
		$$\gamma=\corchetes{\begin{array}{ccc}
	p^{\frac{2}{3}} & \gamma_{12} & \gamma_{13}\\
	0 & p^{-\frac{1}{3}} & qp^{-\frac{4}{3}}\\
	0 & 0 & p^{-\frac{1}{3}}\\
	\end{array}}=\corchetes{\begin{array}{ccc}
	p & p^{\frac{1}{3}}\gamma_{12} & p^{\frac{1}{3}}\gamma_{13}\\
	0 & 1 & \frac{q}{p}\\
	0 & 0 & 1\\
	\end{array}},$$
and since $\gamma_{12}$ and $\gamma_{13}$ are arbitrary, we get (\ref{eq_lem_kmn4_combinaciones_Gamma1_2}).
	\item Let $\mu=\corchetes{\mu_{ij}}$ be a second generator of $N_3$. Conjugating $\Gamma$ by an adequate element of $\PSL$, we can assume that 
		$$\gamma=\corchetes{\begin{array}{ccc}
	p_1 & 0 & 0\\
	0 & 1 & \frac{q_1}{p_1}\\
	0 & 0 & 1\\
	\end{array}}.$$
	Using the proof of (i), $\mu$ has the form given by (\ref{eq_lem_kmn4_combinaciones_Gamma1_4}). Observe that if $\corchetes{\mu,\gamma}=\id$ then $\mu_{12}=\mu_{13}=0$ and then $\Gamma$ would be abelian. Then, we can assume that $\corchetes{\mu,\gamma}\in\Gamma_p\setminus\set{\id}$ (see (ii) of Observation \ref{obs_los_dos_calculos}), comparing the entry 13 of the last expression yields
		\begin{equation}\label{eq_dem_lem_kmn4_combinaciones_Gamma1_4}
		\frac{q\mu_{12}+(1-p_1)p_1\mu_{13}}{p_1^2p_2}=j
		\end{equation}
	for some $j\in\Z\setminus\set{0}$. Since $\Gamma_p\rtimes\prodint{\gamma}$ is normal in $\Gamma$ we have
		$$\mu\gamma\mu^{-1}\in \Gamma_p\rtimes\prodint{\gamma}=\SET{\corchetes{\begin{array}{ccc}
	p_1^k & n & m+\frac{knq_1}{p_1}\\
	0 & 1 & k\frac{q_1}{p_1}\\
	0 & 0 & 1\\
	\end{array}}}{m,n\in\Z}.$$		
		Comparing the entries 12 and 13 in the last expression yields		
		\begin{align}
		\mu_{12}(1-p_1)&=n \nonumber \\
		\mu_{12}\parentesis{\frac{q_1}{p_1}-\frac{q_2}{p_2}}+\mu_{12}\frac{p_1q_2}{p_2}+\mu_{13}(1-p_1)&=m+n\frac{q_1}{p_1}\label{eq_dem_lem_kmn4_combinaciones_Gamma1_3}
		\end{align}
	for some $m,n\in\Z$. Combining both expresions in (\ref{eq_dem_lem_kmn4_combinaciones_Gamma1_3}) we get
		\begin{equation}\label{eq_dem_lem_kmn4_combinaciones_Gamma1_5}
		mp_2+\parentesis{(1-p_1)q_2-p_2q_1}\mu_{12}+p_2\mu_{13}(1-p_1)=0.
		\end{equation}		 
		Solving (\ref{eq_dem_lem_kmn4_combinaciones_Gamma1_4}) and (\ref{eq_dem_lem_kmn4_combinaciones_Gamma1_5}) we get the desired expressions for $\mu_{12}$ and $\mu_{13}$. 
	\item If $\gamma\in N_4$ then 
		\begin{equation}\label{eq_dem_lem_kmn4_combinaciones_Gamma1_1}
		\gamma=\corchetes{\begin{array}{ccc}
	\alpha & \gamma_{12} & \gamma_{13}\\
	0 & \beta & \gamma_{23}\\
	0 & 0 & \alpha^{-1}\beta^{-1}\\
	\end{array}}
	\end{equation}				
	for some $\alpha,\beta\in\C^{\ast}$ such that $\gamma$ is loxodromic and $\alpha\neq\beta^2$, $\gamma_{12},\gamma_{13},\gamma_{23}\in\C$. Using (i) of Lemma \ref{obs_los_dos_calculos} it follows that $\gamma g \gamma^{-1}\in\Gamma_p$, which means that $\alpha\beta^{-1},\alpha^{2}(m\beta-n\gamma_{23})\in\Z$ for any $m,n\in\Z$. If $n=0,m=1$ then $\alpha^2\beta=q$ for some $q\in\Z$. If $n=1,m=0$ then $\alpha\beta^{-1}=p\in\Z$ and $\alpha^2\gamma_{23}=r\in\Z$. All this together yields (\ref{eq_lem_kmn4_combinaciones_Gamma1_3}).
	\item Assume that $\rank(N_4)=2$ and let $\gamma$, $\mu$ be two generators of $N_4$. Using an adequate conjugation we can assume that 
		$$\gamma=\corchetes{\begin{array}{ccc}
	q_1p_1 & 0 & \gamma_{13}\\
	0 & q_1 & 0\\
	0 & 0 & p_1\\
	\end{array}},\;\;\;\;\mu=\corchetes{\begin{array}{ccc}
	q_2p_2 & \mu_{12} & \mu_{13}\\
	0 & q_2 & \mu_{23}\\
	0 & 0 & p_2\\
	\end{array}}.$$
	Then, using normality and Theorem \ref{thm_descomposicion_caso_noconmutativo} we have $\mu\gamma\mu^{-1}\in\Gamma_p\rtimes\prodint{\gamma}$. Comparing the entries 12, we have
		$$\frac{q_1\mu_{12}(1-p_1)}{q_2}=-q_1\mu_{12}(1-p_1)$$
	and therefore, $q_2=-1$. Then $\lambda_{13}(\mu)$ is a torsion element in $\lambda_{13}$, this contradiction proves that we cannot have $\rank(N_4)=2$.
	\item Let $\gamma$ be a generator of $N_3$, using an adequate conjugation we can assume that 
		$$\gamma =\corchetes{\begin{array}{ccc}
	p & 0 & \gamma_{13}\\
	0 & 1 & \frac{q}{p}\\
	0 & 0 & 1\\		
	\end{array}},
	\begin{array}{l}
	\text{ with }p,q\in\Z\setminus\set{0}\\
	\valorabs{p}\neq 1.	
	\end{array}		
	$$
Let us assume that there is an element $\mu\in N_4$, then
		$$\mu =\corchetes{\begin{array}{ccc}
	\alpha & \mu_{12} & \mu_{13}\\
	0 & \beta & \mu_{23}\\
	0 & 0 & \alpha^{-1}\beta^{-1}\\		
	\end{array}},$$
with $\alpha\beta^{2}\neq 1$. Since $\corchetes{\gamma,\mu}\in\Gamma_p$, then comparing the entries 23 yields $q(1-\alpha\beta^2)p^{-1}\alpha^{-1}\beta^{-2}=0$. This means that, either $q=0$ or $\alpha\beta^2=1$. Both conclusions contradict the hypotheses. Thus, there are no elements in $N_4$.
	\end{enumerate}
\end{proof}

\begin{lem}\label{lem_kmn4_combinaciones_Gamma2}
Suppose that $\Gamma_p$ is conjugate to $\Gamma_2=\prodint{g_{0,1},g_{0,y}}$, with $y\nin\R$.
	\begin{enumerate}[(i)]
	\item If $\rank(N_3)=1$ then one generator of $N_3$ is given by
		\begin{equation}\label{eq_lem_kmn4_combinaciones_Gamma2_2}
		\gamma=\corchetes{\begin{array}{ccc}
	\parentesis{p+q\text{Re}(y)}+iq\text{Im}(y) & \gamma_{12} & \gamma_{13}\\
	0 & 1 & \gamma_{23}\\
	0 & 0 & 1\\
	\end{array}},
		\end{equation}
	for $p,q\in\Z$ such that $\valorabs{p}+\valorabs{q}\neq 0$ and $\valorabs{\parentesis{p+q\text{Re}(y)}+iq\text{Im}(y)}\neq 1$, $\gamma_{12},\gamma_{23}\in\C$ and $\gamma_{23}\in\C^\ast$.
	\item If $\rank(N_3)=0$ and $\rank(N_4)=1$ then one generator of $N_4$ is given by
		\begin{equation}\label{eq_lem_kmn4_combinaciones_Gamma2_3}
		\gamma=\corchetes{\begin{array}{ccc}
	\alpha & \gamma_{12} & \gamma_{13}\\
	0 & \alpha^{-2}z_{p,q} & \gamma_{23}\\
	0 & 0 & \alpha z_{p,q}^{-1}\\
	\end{array}},
		\end{equation}
	for $\beta\in\C^{\ast}$, $z_{p,q}=\parentesis{p+q\text{Re}(y)}+iq\text{Im}(y)$, $\valorabs{p}+\valorabs{q}\neq 0$, $\gamma_{12},\gamma_{23},\gamma_{23}\in\C$. Finally, $\gamma$ must be loxodromic. 
	\end{enumerate}
\end{lem}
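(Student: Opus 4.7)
The strategy is to exploit the normality of $\Gamma_p$ inside $\Gamma$, which follows from the semi-direct product decomposition of Theorem \ref{thm_descomposicion_caso_noconmutativo}: conjugation of any element of $\Gamma_p$ by a loxodromic generator of $N_3$ or $N_4$ must land back in $\Gamma_p$, and this translates into algebraic conditions on the diagonal entries of the generator, exactly as in the proofs of the previous two lemmas.

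For part (i), I take an arbitrary generator $\gamma\in N_3$ in the form given by Table \ref{fig_noconmutativo_capas}, with diagonal $(\alpha^{-2},\alpha,\alpha)$, $\valorabs{\alpha}\neq 1$ and $\gamma_{23}\neq 0$. A direct calculation of the conjugation $\gamma g_{0,z}\gamma^{-1}$ shows that almost all the off-diagonal cross-terms cancel, leaving the clean identity $\gamma g_{0,z}\gamma^{-1}=g_{0,\alpha^{-3}z}$. Applying this to the two generators $g_{0,1}$ and $g_{0,y}$ of $\Gamma_p=\Gamma_2$ (and invoking (i) of Observation \ref{obs_los_dos_calculos}) forces $\alpha^{-3}\in W:=\Z+\Z y$, so $\alpha^{-3}=p+qy=z_{p,q}$ for some integers $p,q$. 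Rescaling the projective lift of $\gamma$ by $\alpha$ turns the diagonal into $(z_{p,q},1,1)$; after relabeling the off-diagonal entries (which are free parameters), this is precisely (\ref{eq_lem_kmn4_combinaciones_Gamma2_2}). The conditions $\valorabs{p}+\valorabs{q}\neq 0$ and $\valorabs{z_{p,q}}\neq 1$ correspond respectively to the non-triviality of $\gamma$ and to $\valorabs{\alpha}\neq 1$, while $\gamma_{23}\neq 0$ is preserved under the rescaling.

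For part (ii), I repeat the same strategy with $\gamma\in N_4$ of diagonal $(\alpha,\beta,(\alpha\beta)^{-1})$ and $\alpha\beta^2\neq 1$. The analogous calculation yields $\gamma g_{0,z}\gamma^{-1}=g_{0,\alpha^2\beta z}$, so normality forces $\alpha^2\beta=z_{p,q}\in W$. Writing $\beta=\alpha^{-2}z_{p,q}$ and $(\alpha\beta)^{-1}=\alpha z_{p,q}^{-1}$ substitutes directly into the matrix to give (\ref{eq_lem_kmn4_combinaciones_Gamma2_3}); loxodromicity of $\gamma$ is inherited from the hypothesis, and Corollary \ref{cor_HC3_no_hay_en_no_conmutativos} guarantees that we are not in the excluded case of a type III complex homothety.

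The main technical obstacle is the matrix algebra in the conjugation computation and the careful tracking of the projective rescaling in $\PSL$; once this is done the result is forced. The apparently delicate form $(p+q\text{Re}(y))+iq\text{Im}(y)$ appearing in the $(1,1)$ entry of (\ref{eq_lem_kmn4_combinaciones_Gamma2_2}) is merely a rewriting of $p+qy$ as a complex number, and the hypothesis $y\nin\R$ guarantees that $\Z+\Z y$ is a rank-$2$ discrete additive subgroup so that the pair $(p,q)$ is uniquely determined by $\alpha^{-3}$.
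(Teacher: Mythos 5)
Your proposal is correct and follows essentially the same route as the paper: in both parts you use normality of $\Gamma_p$ to conjugate a generator $g_{0,z}$ of $\Gamma_2$ by the loxodromic generator, obtain $\gamma g_{0,z}\gamma^{-1}=g_{0,\alpha^{-3}z}$ (resp.\ $g_{0,\alpha^{2}\beta z}$), conclude $\alpha^{-3}$ (resp.\ $\alpha^{2}\beta$) lies in $\Z+\Z y$, and rescale the lift to reach the stated normal forms — exactly the paper's computation, with the $(p+q\operatorname{Re}(y))+iq\operatorname{Im}(y)$ expression correctly identified as just $p+qy$. The only cosmetic difference is that the paper extracts the real and imaginary parts via a polar-form computation, while you observe the identity directly; this changes nothing of substance.
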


\begin{proof}
We can assume that $\Gamma_p=\Gamma_2$. Let $g=g_{0,n+my}\in\Gamma_p$, with $n,m\in\Z$. We prove each conclusion separately:
	\begin{enumerate}[(i)]
	\item Let $\gamma\in N_3$ be a generator of the third layer of $\Gamma$. Then
	$$\gamma=\corchetes{\begin{array}{ccc}
	\alpha^{-2} & \gamma_{12} & \gamma_{13}\\
	0 & \alpha & \gamma_{23}\\
	0 & 0 & \alpha\\
	\end{array}}\in N_3,\;
	\begin{array}{l}
	\text{ for some }\gamma_{23},\alpha\in\C^\ast\\
	\valorabs{\alpha}\neq 1.
	\end{array}		
	$$
	Since $\Gamma_p$ is a normal subgroup of $\Gamma$, then $\gamma g \gamma^{-1}\in\Gamma_p$ and then, for any $n,m\in\Z$,
		\begin{equation}\label{eq_dem_lem_kmn4_combinaciones_Gamma2_1}
		\alpha^{-3}(n+my)=p+qy
		\end{equation}		 
		for some $p,q\in\Z$. Denote $y=a+bi$ and $\alpha^{-3}=r(\cos\theta+i\sin\theta)$. In particular, (\ref{eq_dem_lem_kmn4_combinaciones_Gamma2_1}) holds for $n=1$, $m=0$. Then, (\ref{eq_dem_lem_kmn4_combinaciones_Gamma2_1}) becomes $r\cos\theta=p+qa$, $r\sin\theta= q b$. Which means that $\text{Re}\parentesis{\alpha^{-3}}=p+qa$ and $\text{Im}\parentesis{\alpha^{-3}}=qb$ and therefore, $\alpha^{-3}=\parentesis{p+q\text{Re}(y)}+iq\text{Im}(y)$ for $p,q\in\Z$ such that $\valorabs{p}+\valorabs{q}\neq 0$ (otherwise, $\alpha=0$) and $\valorabs{\parentesis{p+q\text{Re}(y)}+iq\text{Im}(y)}\neq 1$ (otherwise, $\gamma$ would not be loxodromic). Since we can re-write $\gamma$ as
			$$\gamma=\corchetes{\begin{array}{ccc}
	\alpha^{-3} & \alpha^{-1}\gamma_{12} & \alpha^{-1}\gamma_{13}\\
	0 & 1 & \alpha^{-1}\gamma_{23}\\
	0 & 0 & 1\\
	\end{array}},$$
	we get (\ref{eq_lem_kmn4_combinaciones_Gamma2_2}).
	
	\item If $\gamma\in N_4$, then $\gamma$ has the form given by (\ref{eq_dem_lem_kmn4_combinaciones_Gamma1_1}). Since $\Gamma_p$ is normal in $\Gamma$, $\gamma g\gamma^{-1}\in \Gamma$, and this implies that $\alpha^2\beta(n+my)=\tilde{n}+\tilde{m}y$, for some $\tilde{n},\tilde{m}\in\Z$. The same argument and calculations used in the proof of (i) above show that $\alpha^2\beta=\parentesis{p+q\text{Re}(y)}+iq\text{Im}(y)$. Substituting this in the form of $\gamma$ yields (\ref{eq_lem_kmn4_combinaciones_Gamma2_3}).		  
	\end{enumerate}
\end{proof}

\begin{lem}\label{lem_kmn4_combinaciones_Gamma3}
Suppose that $\Gamma_p$ is conjugate to $\Gamma_3=\prodint{g_{1,0},g_{x,0}}$, with $x\nin\R$.
	\begin{enumerate}[(i)]
	\item $\Gamma$ cannot contain elements in its third layer.
	\item If $\rank(N_4)=1$ then one generator of $N_4$ is given by
		\begin{equation}\label{eq_lem_kmn4_combinaciones_Gamma3_2}
		\gamma=\corchetes{\begin{array}{ccc}
	p+qx & \gamma_{12} & \gamma_{13}\\
	0 & 1 & 0\\
	0 & 0 & (p+qx)^{-1}\beta^{-3}\\
	\end{array}},
		\begin{array}{l}
		p,q\in\Z;\;\valorabs{p}+\valorabs{q}\neq 0\\
		\beta\in\C^\ast,\;\gamma_{12}, \gamma_{13}\in\C,
		\end{array}			
		\end{equation}
	such that $\gamma$ is loxodromic. Also
		\begin{equation}\label{eq_lem_kmn4_combinaciones_Gamma3_3}
		p-q\valorabs{x}^2 \in\Z,\;\;\;\;p+q+2q\text{Re}(x)\in\Z.
		\end{equation}
	\item If $\rank(N_4)=2$ then one generator is given by (ii) and the other is given by
		\begin{equation}\label{eq_lem_kmn4_combinaciones_Gamma3_4}
		\mu=\corchetes{\begin{array}{ccc}
	p_2+q_2x & \mu_{12} & 0\\
	0 & 1 & 0\\
	0 & 0 & (p_2+q_2x)^{-1}\beta_2^{-3}\\
	\end{array}},
		\end{equation}
	where $p_2,q_2\in\Z$ such that $\valorabs{p_2}+\valorabs{q_2}\neq 0$ and both integers satisfy (\ref{eq_lem_kmn4_combinaciones_Gamma3_3}), $\beta_2\in\C^\ast$ is such that $\gamma$ is loxodromic and $p_2-q\valorabs{x}^2\in\Z$. Furthermore, $\mu_{12}\in\C$ such that
		\begin{equation}\label{eq_lem_kmn4_combinaciones_Gamma3_5}
		\mu_{12}=\parentesis{n+mx-(x+1)(p_2+q_2 x)}\parentesis{1-(p_1+q_1 x)}^{-1}
		\end{equation} 
		for some $n,m\in\Z$ such that
		\begin{equation}\label{eq_lem_kmn4_combinaciones_Gamma3_6}
		\mu_{12}\frac{p_1+q_1 x-1}{(p_1+q_1 x)(p_2+q_2 x)}=k_1+k_2 x,\;\;\;k_1,k_2\in\Z.
		\end{equation}
	\end{enumerate}
\end{lem}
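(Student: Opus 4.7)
The uniform strategy is to exploit normality of $\Gamma_p$ in $\Gamma$: any loxodromic $\gamma\in\Gamma$ and any $g\in\Gamma_p$ must satisfy $\gamma g\gamma^{-1}\in\Gamma_p$. Since $\Gamma_p=\Gamma_3$ consists of the matrices $g_{t,0}$ with $t=n+mx\in W:=\Z+\Z x$, these conjugates must have $(2,3)$ entry zero, $(1,3)$ entry zero, and $(1,2)$ entry in $W$. Writing $\gamma$ in upper triangular form and computing $\gamma g_{t,0}\gamma^{-1}$ entry by entry reduces each claim to an arithmetic constraint.

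For \textbf{(i)}, write $\gamma\in N_3$ with diagonal $(\alpha^{-2},\alpha,\alpha)$, $\gamma_{23}\neq 0$, $|\alpha|\neq 1$. A direct computation shows
$$\gamma g_{t,0}\gamma^{-1}=\corchetes{\begin{array}{ccc}1 & \alpha^{-3}t & -\alpha^{-4}t\gamma_{23}\\0 & 1 & 0\\0 & 0 & 1\end{array}},$$
so the $(1,3)$-entry is $-\alpha^{-4}t\gamma_{23}$. For this to vanish for all $t\in W\setminus\set{0}$ we would need $\gamma_{23}=0$, contradicting $\gamma\in N_3$. For \textbf{(ii)}, an analogous calculation with $\gamma\in N_4$ of diagonal $(\alpha,\beta,\alpha^{-1}\beta^{-1})$ produces
$$\gamma g_{t,0}\gamma^{-1}=\corchetes{\begin{array}{ccc}1 & (\alpha/\beta)t & -\alpha^2 t\gamma_{23}\\0 & 1 & 0\\0 & 0 & 1\end{array}},$$
so normality forces $\gamma_{23}=0$ and $(\alpha/\beta)W\subseteq W$. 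Taking $t=1$ gives $\alpha/\beta=p+qx$ with $p,q\in\Z$, $|p|+|q|\neq 0$. Taking $t=x$ gives $(p+qx)x\in W$; writing $x=\text{Re}(x)+i\,\text{Im}(x)$ and splitting into real and imaginary parts yields $q|x|^2\in\Z$ and $2q\,\text{Re}(x)\in\Z$, which are precisely conditions \eqref{eq_lem_kmn4_combinaciones_Gamma3_3}. Rescaling the lift of $\gamma$ by $\beta$ (a projective equality) brings the matrix to the stated form.

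For \textbf{(iii)}, apply (ii) to each of the two generators $\gamma,\mu$ of $N_4$, obtaining the respective integer pairs $(p_1,q_1),(p_2,q_2)$ and eigenvalue parameters $\beta_1,\beta_2$. Using Theorem \ref{thm_descomposicion_caso_noconmutativo} we may decompose $\Gamma=\Gamma_p\rtimes\prodint{\gamma}\rtimes\prodint{\mu}$; conjugating $\mu$ by suitable elements of $\Gamma_p\rtimes\prodint{\gamma}$ (which preserves the normal form of both $\Gamma_p$ and $\gamma$) allows us to shift $\mu_{13}$ by an element of the image of the map $h\mapsto h-(p_1+q_1x)h$ etc.; a careful choice kills $\mu_{13}$ outright. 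Now Observation \ref{obs_los_dos_calculos}(ii) says $[\mu,\gamma]\in\Gamma_p$; computing this commutator with $\gamma_{23}=\mu_{23}=\mu_{13}=0$ reduces to a linear equation on $\mu_{12}$ whose $(1,2)$-entry must lie in $W$, giving exactly \eqref{eq_lem_kmn4_combinaciones_Gamma3_6}. Finally, the normality condition $\mu\gamma\mu^{-1}\in\Gamma_p\rtimes\prodint{\gamma}$ read off the $(1,2)$-entry expresses $\mu_{12}$ in terms of some $n,m\in\Z$, yielding \eqref{eq_lem_kmn4_combinaciones_Gamma3_5}.

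The main obstacle is the bookkeeping in \textbf{(iii)}: one must verify that the conjugation used to annihilate $\mu_{13}$ genuinely lies in the subgroup $\Gamma_p\rtimes\prodint{\gamma}$ (so it does not alter the previously-fixed normal forms of $\Gamma_p$ and $\gamma$), and then patiently extract from two different constraints — normality of $\Gamma_p\rtimes\prodint{\gamma}$ under conjugation by $\mu$, and the commutator condition — the simultaneous rational dependence that produces the stated closed forms for $\mu_{12}$. Parts (i) and (ii) are essentially routine matrix arithmetic; part (iii) is where the algebra is delicate.
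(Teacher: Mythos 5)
Parts (i) and (ii) are correct and follow the same route as the paper: normality of $\Gamma_p$ forces the $(1,3)$-entry of $\gamma g_{t,0}\gamma^{-1}$ to vanish (hence $\gamma_{23}=0$, killing the third layer) and forces $(\alpha/\beta)W\subseteq W$, from which $\alpha/\beta=p+qx$ and the arithmetic conditions (\ref{eq_lem_kmn4_combinaciones_Gamma3_3}) follow; your version of those conditions ($q\valorabs{x}^2\in\Z$, $2q\,\mathrm{Re}(x)\in\Z$) is equivalent to the stated one since $p,q\in\Z$.

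In (iii) there is a genuine gap, precisely at the step you yourself flag as the main obstacle: annihilating $\mu_{13}$ by conjugating by elements of $\Gamma_p\rtimes\prodint{\gamma}$ cannot work. After (ii) you have $\mu_{23}=0$, and a direct computation shows that conjugating $\mu$ by $g_{t,0}$ sends $\mu_{12}\mapsto\mu_{12}+t\parentesis{1-(p_2+q_2x)}$ while leaving $\mu_{13}$ \emph{unchanged} (the shift map $h\mapsto h-(p+qx)h$ you invoke acts on the $(1,2)$-entry, not the $(1,3)$-entry), and conjugating by the diagonal element $\gamma$ merely rescales $\mu_{13}$ by a nonzero factor. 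Any conjugation that does reach $\mu_{13}$ (e.g.\ by $h_{s,0}$) destroys the diagonal normal form of $\gamma$, since the centralizer of a diagonal matrix with distinct eigenvalues is diagonal. The paper obtains $\mu_{13}=0$ differently and you should too: since elements of $\Gamma_p=\Gamma_3$ have zero $(1,3)$-entry, the condition $\corchetes{\mu,\gamma}\in\Gamma_p\cup\set{\id}$ of Observation \ref{obs_los_dos_calculos}(ii) already forces the $(1,3)$-entry of the commutator, which equals $\mu_{13}(d_3-d_1)$ up to a nonzero factor (where $d_1,d_3$ are the outer diagonal entries of $\gamma$), to vanish; as $d_1=d_3$ would make $\gamma$ a complex homothety, excluded by Corollary \ref{cor_HC3_no_hay_en_no_conmutativos}, one concludes $\mu_{13}=0$ directly. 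With that repaired, the rest of your (iii) — the $(1,2)$-entry of the commutator giving (\ref{eq_lem_kmn4_combinaciones_Gamma3_6}), and normality of $\Gamma_p\rtimes\prodint{\gamma}$ under conjugation by $\mu$ giving (\ref{eq_lem_kmn4_combinaciones_Gamma3_5}) up to relabelling $n,m$ within $W$ — matches the paper's argument, except that the paper also rules out $\corchetes{\mu,\gamma}=\id$ (which would make $\Gamma$ commutative), a case you should dispose of explicitly.
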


\begin{proof}
We can assume that $\Gamma_p=\Gamma_2$. Let $g=g_{n+mx}\in\Gamma_p$. We prove each conclusion separately:
	\begin{enumerate}[(i)]
	\item The argument is the same used in the first conclusion of Lemma \ref{lem_kmn4_combinaciones_Gamma4}.
	\item If $\gamma\in N_4$, then $\gamma$ has the form given by (\ref{eq_dem_lem_kmn4_combinaciones_Gamma1_1}). Since $\Gamma_p$ is normal in $\Gamma$, $\gamma g\gamma^{-1}\in \Gamma_p$, and this implies that $\alpha\beta^{-1}(n+mx)=\tilde{n}+\tilde{m}x$, and $-\alpha^2\gamma_{23}(n+mx)=0$, for some $\tilde{n},\tilde{m}\in\Z$. Then $\gamma_{23}=0$, and using the same argument and calculations as in the first part of the proof of Lemma \ref{lem_kmn4_combinaciones_Gamma2} we have
		$$\alpha\beta^{-1}=\parentesis{p+q\text{Re}(x)}+iq\text{Im}(x).$$
	Substituting this in (\ref{eq_dem_lem_kmn4_combinaciones_Gamma1_1}) yields (\ref{eq_lem_kmn4_combinaciones_Gamma3_2}). Again, since $\gamma g\gamma^{-1}\in \Gamma_p$ for $\gamma$ as in (\ref{eq_lem_kmn4_combinaciones_Gamma3_2}), comparing entries 12 yields 
		\begin{equation}\label{eq_dem_lem_kmn4_combinaciones_Gamma3_1}
		p + (p+q)x + qx^2=p'+q'x,
		\end{equation}				
		 for some $p',q'\in\Z$. To solve (\ref{eq_dem_lem_kmn4_combinaciones_Gamma3_1}), we write $x=\text{Re}(x)+i\text{Im}(x)$ and assume $\text{Im}(x)\neq 0$. From the equation corresponding to the imaginary part we get $(p+q)+2q\text{Re}(x)\in\Z$, substituting this into the equation corresponding to the real part we get $p-q\valorabs{x}^2\in\Z$. This verifies (\ref{eq_lem_kmn4_combinaciones_Gamma3_3}).
		 
		 \item Let $\gamma$ be a generator of $N_4$ given by (ii) and let $\mu$ be the second generator of $\mu$, then $\mu$ has the same form given by (ii), that is
		 	$$\mu=\corchetes{\begin{array}{ccc}
			p_2+q_2x & \mu_{12} & 0\\
			0 & 1 & 0\\
			0 & 0 & (p_2+q_2x)^{-1}\beta_2^{-3}\\
			\end{array}}.$$
		 We can conjugate $\Gamma$ by a suitable $A\in\PSL$ such that 
		 	$$A\gamma A^{-1}=\text{Diag}\parentesis{p_1+q_1 x ,1,\alpha^{-3}(p+q x)^{-1}}.$$ 
		 We have that, either $\corchetes{\mu,\gamma}=\id$ or $\corchetes{\mu,\gamma}=g_{k_1,k_2}$ for some $k_1,k_2\in\Z$. Both possibilities imply that $\mu_{13}=0$. If $\corchetes{\mu,\gamma}=\id$ then $\mu_{12}=0$, and then $\mu$ is diagonal. Since $\gamma$ is diagonal and the parabolic part is abelian, then $\Gamma$ would be abelian (and therefore, it has already been described in Section 4 of \cite{toledo2021dynamics}). Then, $\corchetes{\mu,\gamma}=g_{k_1,k_2}$ for some $k_1,k_2\in\Z$, this yields (\ref{eq_lem_kmn4_combinaciones_Gamma3_6}). On the other hand, since $\Gamma_p\rtimes\prodint{\gamma}$ is normal in $\Gamma$, it follows that $\mu g_{1,1} \gamma \mu^{-1}=g_{n,m} \gamma^k$ for some $n,m,k\in\Z$. A direct calculation shows that $k=1$, the remaining expression yields (\ref{eq_lem_kmn4_combinaciones_Gamma3_5}).
			 	 
	\end{enumerate}
\end{proof}

\begin{lem}\label{lem_kmn4_combinaciones_Gamma4}
Suppose that $\Gamma_p$ is conjugate to $\Gamma_4=\prodint{g_{1,0}}$.
	\begin{enumerate}[(i)]
	\item $\Gamma$ cannot contain elements of the third layer.
	\item If $\rank(N_4)=1$, one generator of $N_4$ is given by
		\begin{equation}\label{eq_lem_kmn4_combinaciones_Gamma4_2}
		\gamma=\corchetes{\begin{array}{ccc}
	p\alpha & \gamma_{12} & \gamma_{13}\\
	0 & \alpha & 0\\
	0 & 0 & p^{-1}\alpha^{-2}\\
	\end{array}},\;
		\begin{array}{l}
		p\in\Z\setminus\set{0,1},\;\alpha\in\C^{\ast}\\
		\valorabs{\alpha}\neq 1\\
		\gamma_{12},\gamma_{13}\in\C.
		\end{array}
		\end{equation}
	\item If $\rank(N_4)=2$, one generator of $N_4$ is given by (\ref{eq_lem_kmn4_combinaciones_Gamma4_2}), and the other satisfies one of the following conditions:
		\begin{enumerate}
		\item If $p^2\alpha^3=1$ then 
			$$\mu=\corchetes{\begin{array}{ccc}
	q\beta & \frac{\beta jpq}{1-p} & \mu_{13}\\
	0 & \beta & 0\\
	0 & 0 &q^{-1}\beta^{-2}\\
	\end{array}},\;\mu_{13}\in\C.$$
		\item If $p^2\alpha^3\neq 1$ then 
			$$\mu=\corchetes{\begin{array}{ccc}
	q\beta & \frac{\beta jpq}{1-p} & 0\\
	0 & \beta & 0\\
	0 & 0 &q^{-1}\beta^{-2}\\
	\end{array}}.$$
		\end{enumerate}
		In both cases, $\beta\in\C^\ast$, $j,q\in\Z$ and $q\neq 0$ such that $\mu$ is loxodromic.
	\item If $\rank(N_4)=3$, two generators of $N_4$ are given by (ii) and (iii) respectively, and the third generator $\eta$ is given by
		\begin{equation}\label{eq_lem_kmn4_combinaciones_Gamma4_3}
		\eta=\corchetes{\begin{array}{ccc}
	r\delta & \frac{\delta k pr}{1-p} & \eta_{13}\\
	0 & \delta & 0\\
	0 & 0 &q^{-1}\delta^{-2}\\
	\end{array}},
		\end{equation}
	where $\delta\in\C^\ast$, $r\in\Z\setminus\set{-1,0,1}$, $r\delta^3\neq 1$ and
		\begin{align*}
		\eta_{13} &= \frac{p\alpha^2\parentesis{\gamma_{13}(1-r^2\delta^3)-pq\alpha\beta^2\mu_{13}(1+r^2\delta^3)}}{r\delta^2(1-p^2q^2\alpha^3\beta^3)} \\
		k &= \frac{(1-p)\parentesis{\alpha(n+r)-\gamma_{12}(1-r)}-j_2p^2q\alpha(1-r)}{pr\alpha(1-pq)}
		\end{align*}
	for $n\in\Z$. Also, either
		$$
		k r(1-q) = jq(1-r)\;\; \text{ or } \;\;qr(1-p) \left.\right|\; p\parentesis{kr(1-q)-jq(1-r)},
		$$			 
	depending on whether $\mu$ and $\eta$ commute.
	\end{enumerate}		 
\end{lem}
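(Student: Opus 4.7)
The plan is to proceed in parallel with the pattern established in the previous lemmas: assume after conjugation that $\Gamma_p = \Gamma_4 = \prodint{g_{1,0}}$, and for each of (i)--(iv) extract constraints from the normality of $\Gamma_p$ in $\Gamma$ (and, where needed, the normality conditions from Observation \ref{obs_los_dos_calculos}) by a direct matrix computation of the relevant conjugates/commutators. The key observation that makes everything tractable is that any generator $\gamma \in N_3$ or $\gamma \in N_4$ has the form described in the table before Section 3, and $g_{1,0} = I + E_{12}$, so one can compute $\gamma g_{1,0}\gamma^{-1} = I + \gamma E_{12}\gamma^{-1}$, reducing everything to one entry of $\gamma^{-1}$.

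For (i), if $\gamma \in N_3$ then the lower-right $2\times 2$ block of $\gamma$ is genuinely parabolic with $\gamma_{23}\neq 0$; computing $\gamma g_{1,0}\gamma^{-1}$ yields a matrix whose $(1,3)$ entry is a nonzero multiple of $\gamma_{23}$. Since the result must be $g_{n,0}$ for some $n \in \Z$ (i.e.\ have $(1,3)$ entry equal to $0$), this forces $\gamma_{23}=0$, contradicting $\gamma \in N_3$. For (ii), if $\gamma \in N_4$ then the same computation gives $(1,2)$-entry $\gamma_{11}/\gamma_{22}$ and $(1,3)$-entry proportional to $\gamma_{23}$. Normality forces $\gamma_{23} = 0$ and $\gamma_{11}/\gamma_{22} = p \in \Z$; writing $\gamma_{22}=\alpha$ and using $\det=1$ yields (\ref{eq_lem_kmn4_combinaciones_Gamma4_2}), and the conditions $p\neq 0,1$ and $|\alpha|\neq 1$ follow from $\gamma$ being loxodromic and belonging to the fourth (not third) layer.

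For (iii), I would first conjugate $\Gamma$ by a suitable diagonal matrix so that $\gamma=\text{Diag}(p\alpha,\alpha,p^{-1}\alpha^{-2})$; this conjugation preserves $\Gamma_p$. A second generator $\mu \in N_4$ has the form from (ii), and since $\gamma$ is diagonal the commutator $[\gamma,\mu]$ has entries computable by ratios of $\gamma$'s diagonal, giving a $(1,2)$-entry proportional to $\mu_{12}(1-p)/\beta$ and a $(1,3)$-entry proportional to $\mu_{13}(1-p^2\alpha^3)$. By Observation \ref{obs_los_dos_calculos}(ii) this commutator lies in $\Gamma_p$; the $(1,3)$ constraint gives the dichotomy (a)/(b) depending on whether $p^2\alpha^3 = 1$, and the $(1,2)$ constraint gives $\mu_{12} \in \beta(1-p)^{-1}\Z$. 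Imposing additionally that $\mu^{-1}\gamma\mu$ lies in $\Gamma_p \rtimes \prodint{\gamma}$ (Observation \ref{obs_los_dos_calculos}(i), with $k=1$ forced by comparing diagonals) refines the integer to the stated $jpq$ form.

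For (iv) I would take a third generator $\eta \in N_4$ of the form from (ii), keeping $\gamma$ diagonal. Two commutator conditions $[\eta,\gamma], [\eta,\mu] \in \Gamma_p$ together with the semidirect-product condition $\eta^{-1}(\gamma^a\mu^b)\eta = \gamma^{a}\mu^{b}$ modulo $\Gamma_p$ produce a linear system in $\eta_{12},\eta_{13}$ and the auxiliary integer $k$. Solving for $\eta_{13}$ from $[\gamma,\eta]$ and $[\mu,\eta]$ and eliminating between them gives the displayed closed form for $\eta_{13}$; solving for $\eta_{12}$ modulo $\Gamma_p$ yields the expression for $k$. The final dichotomy ($kr(1-q)=jq(1-r)$ versus divisibility) is exactly the statement that $[\mu,\eta]$ is trivial versus a nontrivial element of $\Gamma_p$. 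The main obstacle will be the bookkeeping in (iv): three loxodromic generators produce several coupled linear conditions in the six parameters $\alpha,\beta,\delta,p,q,r$ together with three off-diagonal complex entries and three auxiliary integers, and one must be careful that the chosen diagonal conjugation simplifying $\gamma$ does not disturb the $N_3$-free hypothesis or re-introduce nontrivial $(2,3)$-entries when computing $\eta_{13}$.
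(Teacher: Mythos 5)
Your proposal follows essentially the same route as the paper's proof: normality of $\Gamma_p=\prodint{g_{1,0}}$ forces $\gamma_{23}=0$ and $\gamma_{11}\gamma_{22}^{-1}\in\Z$ in parts (i)--(ii), and after diagonalizing $\gamma$ the commutator and semidirect-product conditions of Observation \ref{obs_los_dos_calculos} yield the constraints of (iii)--(iv), including the dichotomy on $p^2\alpha^3$ and on whether $\corchetes{\mu,\eta}$ is trivial. The only cosmetic difference is that the paper extracts the $jpq$ form of $\mu_{12}$ and the expressions for $\eta_{13}$ and $k$ from single normality relations such as $\eta g\gamma\mu\eta^{-1}=g^{n}\gamma^{m_1}\mu^{m_2}$, rather than your two-step refinement, which amounts to the same computation.
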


\begin{proof}
We assume that $\Gamma_p=\Gamma_4$, we denote $g=g_{1,0}$. We prove each conclusion separately:
	\begin{enumerate}[(i)]
	\item Assume that $\Gamma$ contain an element of the third layer, 
		\begin{equation}\label{eq_dem_lem_kmn4_combinaciones_Gamma4_2}
		\gamma=\corchetes{\begin{array}{ccc}
	\alpha^{-2} & \gamma_{12} & \gamma_{13}\\
	0 & \alpha & \gamma_{23}\\
	0 & 0 & \alpha\\
	\end{array}}\in N_3,
	\begin{array}{l}
	\gamma_{23},\alpha\in\C^\ast\\
	\valorabs{\alpha}\neq 1
	\end{array}
	\end{equation}				
	Using (i) of Lemma \ref{obs_los_dos_calculos}, $\gamma g \gamma^{-1}\in\Gamma_p$. Comparing entries 13 yields $\gamma_{23}=0$, contradicting that $\gamma\in N_3$.
	\item If $\rank(N_4)=1$, let $\gamma\in N_4$ be an element given by
		\begin{equation}\label{eq_dem_lem_kmn4_combinaciones_Gamma4_1}
		\gamma=\corchetes{\begin{array}{ccc}
	\beta & \gamma_{12} & \gamma_{13}\\
	0 & \alpha & \gamma_{23}\\
	0 & 0 & \alpha^{-1}\beta^{-1}\\
	\end{array}},
		\begin{array}{l}
		\alpha^2\beta\neq 1,\;\alpha\neq\beta\\
		\gamma\text{ is loxodromic.}
		\end{array}
		\end{equation}				
	Since $\Gamma_p$ is a normal subgroup of $\Gamma$, then $\gamma g \gamma^{-1}\in\Gamma_p$. Comparing entries 13 yields $\gamma_{23}=0$ and, for $n=1$ in particular, $\beta\alpha^{-1}\in\Z$, in other words, $\beta=p\alpha$ for any $p\in\Z$. All of these conditions together with (\ref{eq_dem_lem_kmn4_combinaciones_Gamma4_1}) imply (\ref{eq_lem_kmn4_combinaciones_Gamma4_2}).
	\item Let $\gamma\in N_4$ be one generator of $N_4$ given by (\ref{eq_lem_kmn4_combinaciones_Gamma4_2}). Up to a suitable conjugation we can assume that $\gamma=\text{Diag}\parentesis{p\alpha,\alpha,p^{-1}\alpha^{-2}}$, with $p\in\Z\setminus\set{0}$ and $\alpha\in\C^\ast$. Using part (ii), we know that 
		$$\mu=\corchetes{\begin{array}{ccc}
	q\beta & \mu_{12} & \mu_{13}\\
	0 & \beta & 0\\
	0 & 0 & q^{-1}\beta^{-2}\\
	\end{array}}.$$
	Since $\corchetes{\mu,\gamma}\in\prodint{g}$, comparing the respective entries we have $\mu_{13}(1-p^2\alpha^3)=0$ and $\mu_{12}(1-p)=\beta jpq$ for some $j\in\Z$. Therefore
	\begin{equation}\label{eq_dem_lem_kmn4_combinaciones_Gamma4_03}
	\mu_{13} = 0\text{ or }p^2\alpha^3=1;\;\;\text{ and }\; \mu_{12} = \frac{\beta jpq}{1-p}. 
	\end{equation}		
	A direct calculation shows that these two conditions are equivalent to prove that $\Gamma_p\rtimes\prodint{\gamma}$ is normal in $\Gamma_p\rtimes\prodint{\gamma}\rtimes\prodint{\mu}$. Therefore the only restrictions for $\mu$ are (\ref{eq_dem_lem_kmn4_combinaciones_Gamma4_03}). This proves this part of the lemma.
	
	\item If $\rank(N_4)=3$, we denote by $\gamma$ and $\mu$ the first two generators of $N_4$, they have the forms given by (ii) and (iii) respectively. Furthermore, $\eta$ has the form (\ref{eq_lem_kmn4_combinaciones_Gamma4_3}). By the normality of $\Gamma_p\rtimes\prodint{\gamma}\rtimes\prodint{\mu}$ in $\Gamma$ we have $\eta g \gamma\mu\eta^{-1}=g^{n}\gamma^{m_1}\mu^{m_2}$ for some integers $n,m_1,m_2\in\Z$. Comparing entries 12 and 13 in both sides in the previous equation and solving for $\eta_{13}$ and $k$ yields the expressions for these variables. On the other hand, $\corchetes{\mu,\eta}\in\Gamma_p$, let $\xi$ be the entry 13 of $\corchetes{\mu,\eta}$, then 
		$$\xi=\frac{q\beta\mu_{13}\parentesis{1-r^2\delta^3}}{r\delta^2\parentesis{1-q^2\beta^3}}.$$
	If $\corchetes{\mu,\eta}=\id$ then $\xi=0$, if $\corchetes{\mu,\eta}\neq\id$ the $\xi\in\Z$. This completes the proof. 
	\end{enumerate}
\end{proof}

\begin{lem}\label{lem_kmn4_combinaciones_Gamma5}
Suppose that $\Gamma_p$ is conjugate to $\Gamma_5=\prodint{g_{0,1}}$.
	\begin{enumerate}
	\item If $\rank(N_3)=1$, one generator of $N_3$ is the element
		\begin{equation}\label{eq_lem_kmn4_combinaciones_Gamma5_2}
		\gamma=\corchetes{\begin{array}{ccc}
	p & \gamma_{12} & \gamma_{13}\\
	0 & 1 & \gamma_{23}\\
	0 & 0 & 1\\
	\end{array}},
	\begin{array}{l}
	p\in\Z\setminus\set{-1,0,1}\\
	\gamma_{12},\gamma_{13}\in\C\text{, }\gamma_{23}\in\C^\ast.
	\end{array}
	\end{equation}				
	\item It's not possible to have $\rank(N_3)=2$.
	\item If $\rank(N_3)=0$ and $\rank(N_4)=1$, one generator of $N_4$ is the element
		\begin{equation}\label{eq_lem_kmn4_combinaciones_Gamma5_0}
		\gamma=\corchetes{\begin{array}{ccc}
		\alpha & \gamma_{12} & \gamma_{13}\\
		0 & p\alpha^{-2} & \gamma_{23}\\
		0 & 0 & p^{-1}\alpha\\
		\end{array}},
		\begin{array}{l}
		\alpha\in\C^{\ast},\;p\in\Z\setminus\set{0}\\
		p^2\neq \alpha^3,\\
		\gamma\text{ is loxodromic.}
		\end{array}
		\end{equation}
	\item If $\rank(N_3)=1$, $\Gamma$ cannot contain elements in its fourth layer.	
	\item If $\rank(N_3)=0$ and $\rank(N_4)=2$, one generator of $N_4$ is given by (\ref{eq_lem_kmn4_combinaciones_Gamma5_0}), and the other by
		\begin{equation}\label{eq_lem_kmn4_combinaciones_Gamma5_00}
		\mu=\corchetes{\begin{array}{ccc}
		\beta & 0 & \frac{jp}{1-p}\beta\\
		0 & q\beta^{-2} & 0\\
		0 & 0 & q^{-1}\beta\\
		\end{array}},
		\begin{array}{l}
		\beta\in\C^\ast,\;q,j\in\Z\setminus\set{0}\\
		q^2\neq\beta^3,\;\mu\text{ is loxodromic.}
		\end{array}
		\end{equation}
	\item It is not possible to have $\rank(N_4)=3$.
	\end{enumerate}
\end{lem}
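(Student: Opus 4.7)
The plan is to mirror the template used in Lemmas \ref{lem_kmn4_combinaciones_Gamma1}--\ref{lem_kmn4_combinaciones_Gamma4}. Assuming $\Gamma_p = \Gamma_5 = \langle g_{0,1}\rangle$, in each part I take a general element $\rho$ in the layer under analysis and repeatedly exploit Observation \ref{obs_los_dos_calculos}. The key first reduction is that, by normality of $\Gamma_p$, the conjugate $\rho g_{0,1}\rho^{-1}$ must equal $g_{0,n}$ for some $n\in\Z$. A direct computation shows that the (1,3)-entry of this conjugate equals $\alpha^{-3}$ in the $N_3$ parametrization and $\alpha^{2}\beta$ in the $N_4$ parametrization, while the (1,2) and (2,3) entries vanish identically. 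Integrality of the (1,3)-entry, together with the loxodromic condition $|\,\cdot\,|\neq 1$, immediately yields the main integer invariant $p\in\Z\setminus\{-1,0,1\}$ in parts (i) and (iii); rescaling the lift then produces the matrix forms (\ref{eq_lem_kmn4_combinaciones_Gamma5_2}) and (\ref{eq_lem_kmn4_combinaciones_Gamma5_0}).

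For (v) I would take $\gamma$ already in the form (\ref{eq_lem_kmn4_combinaciones_Gamma5_0}), conjugate by an element of $\PSL$ to render $\gamma$ diagonal while preserving $\Gamma_p$, and write the second generator $\mu$ in the canonical form of (iii) with its own parameters $\beta,q$. The normality of $\Gamma_p\rtimes\langle\gamma\rangle$ inside $\Gamma$ forces $\mu\gamma\mu^{-1}=g_{0,\ell}\gamma$ for some $\ell\in\Z$; expanding both sides and comparing the (1,2), (2,3) and (1,3) entries gives three scalar equations whose only solutions are $\mu_{12}=0$, $\mu_{23}=0$, and $\mu_{13}=\frac{jp\beta}{1-p}$ for some $j\in\Z$, which is exactly (\ref{eq_lem_kmn4_combinaciones_Gamma5_00}).

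The impossibility statements (ii), (iv), (vi) each proceed by deriving a contradiction once the normality constraints have been imposed. For (iv), a general element $\gamma\in N_3$ has $\gamma_{23}\neq 0$ and a general $\mu\in N_4$ has diagonal parameters $a,b$ satisfying $ab^{2}\neq 1$; a short calculation shows the (2,3)-entry of $\mu\gamma\mu^{-1}$ is $ab^2\gamma_{23}$ while the (2,3)-entry of the required $g_{0,\ell}\gamma$ on the right is $\gamma_{23}$, so $(ab^2-1)\gamma_{23}=0$, forcing $\gamma_{23}=0$ and contradicting $\gamma\in N_3$. For (ii), after normality and commutator analysis for two $N_3$-generators $\gamma,\mu$, one deduces $\mu_{12}=0$ and $\mu_{13}\in\frac{1}{1-p}\Z$; then the sequence $\gamma^{n}\mu\gamma^{-n}$ has (1,3)-entry $p^{n}\mu_{13}$ and all other entries constant, so when $\mu_{13}\neq 0$ these are distinct elements accumulating in $\PSL$ (taking $n\to\pm\infty$ according to whether $|p|<1$ or $|p|>1$), contradicting discreteness. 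The remaining subcase $\mu_{13}=0$ is ruled out by noting that $\gamma\mu^{-1}$ becomes diagonal with two equal eigenvalues and, after rescaling its lift, is a type III complex homothety, which is excluded by Corollary \ref{cor_HC3_no_hay_en_no_conmutativos}. For (vi) one assumes three generators $\eta_{1},\eta_{2},\eta_{3}\in N_{4}$ of the form described in (v); applying the commutator-in-$\Gamma_p$ condition pairwise yields a system of integer relations on their (1,1)-entries $p,q,r$, and this system forces one of the three generators to be expressible as a product of powers of the other two modulo $\Gamma_p$, contradicting $\rank(N_4)=3$.

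The main technical obstacle will be the rank-three impossibility in (vi): three loxodromic generators produce three simultaneous integrality conditions on their diagonal parts together with three coupled commutator identities living in the rank-one group $\Gamma_p$, and the crux is organizing this bookkeeping to extract a single integer-linear dependence that collapses the free rank from three to at most two. By contrast, parts (i)--(v) and the impossibility (iv) are essentially direct adaptations of the analogous arguments already carried out in Lemmas \ref{lem_kmn4_combinaciones_Gamma1} and \ref{lem_kmn4_combinaciones_Gamma4}, with the computations only slightly modified by the fact that the generator of $\Gamma_p$ sits in the (1,3)-slot rather than the (1,2)-slot.
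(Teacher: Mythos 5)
Your treatment of parts (i), (iii), (iv) and (v) follows essentially the same route as the paper: normality of $\Gamma_p$ gives $\rho g_{0,1}\rho^{-1}=g_{0,n}$, whose $(1,3)$-entry is $\alpha^{-3}$ (resp.\ $\alpha^{2}\beta$), and the commutator conditions of Observation \ref{obs_los_dos_calculos} pin down the off-diagonal entries; for (iv) the paper likewise extracts $\gamma_{23}(1-\alpha\beta^{2})=0$ from the $(2,3)$-entry. One small correction: in part (iii) the loxodromic condition does \emph{not} force $|\alpha^{2}\beta|\neq 1$, so the integer invariant there is only $p\in\Z\setminus\set{0}$, exactly as the statement allows; your claim that it lands in $\Z\setminus\set{-1,0,1}$ in both (i) and (iii) is an overstatement.

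The genuine gap is in part (ii). Your accumulation argument handles $\mu_{13}\neq 0$ (after normalizing $\gamma_{13}=0$; note that without that normalization the $(1,3)$-entry of $\gamma^{n}\mu\gamma^{-n}$ is the affine orbit $p^{n}\mu_{13}+(1-q)\gamma_{13}\frac{p^{n}-1}{p-1}$, not $p^{n}\mu_{13}$), but the residual case $\mu_{13}=0$ is not disposed of by your argument: with $\gamma_{12}=\mu_{12}=\gamma_{13}=\mu_{13}=0$ the element $\gamma\mu^{-1}$ has $(2,3)$-entry $\gamma_{23}-\mu_{23}$, and both $\gamma_{23},\mu_{23}$ are nonzero but need not be equal, so $\gamma\mu^{-1}$ is not diagonal in general. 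Even in the degenerate situation where it were diagonal, its normalized form is $\text{Diag}(\lambda^{-2},\lambda,\lambda)$, a \emph{type I} complex homothety, whereas Corollary \ref{cor_HC3_no_hay_en_no_conmutativos} excludes only type III; so the cited corollary does not apply. The paper avoids this case split entirely: it uses normality of $\Gamma_p\rtimes\prodint{\gamma}$ to get $\mu g\gamma\mu^{-1}=g^{n}\gamma$, compares $(1,3)$-entries to obtain $q(1+\gamma_{13})+\mu_{13}(1-p)=1+\gamma_{13}$, and combines this with the commutator integrality condition to force $q=(1+jp)^{-1}\in\Z$, which is impossible for $|q|\geq 2$. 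Finally, for part (vi) you only offer a plan (``a system of integer relations \dots forces one generator to be a product of the other two''), with no identification of which relations collapse the rank; as it stands this is not a proof. (For what it is worth, the paper's own proof also contains no argument for item (vi), so there is nothing there to compare against.)
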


\begin{proof}
Assume that $\Gamma_p=\Gamma_5$ and let $g=g_{0,n}\in\Gamma_p$, for $n\in\Z$. We verify each conclusion:
\begin{enumerate}[(i)]
	\item Let $\gamma\in N_3$, then $\gamma$ has the form given by (\ref{eq_dem_lem_kmn4_combinaciones_Gamma4_2}). As before, the normality of $\Gamma_p$ in $\Gamma$ implies that $\gamma g \gamma^{-1}\in\Gamma_p$, and this implies that $\alpha^{-3}n\in \Z$ for all $n\in\Z$. Taking $n=1$, yields $\alpha=p^{-\frac{1}{3}}$. Substituting this in the form of $\gamma$ verifies (\ref{eq_lem_kmn4_combinaciones_Gamma5_2}).
	
	\item Let us suppose that $\rank(N_2)=2$ with $N_2=\prodint{\gamma,\mu}$ where $\gamma$ has the form (\ref{eq_lem_kmn4_combinaciones_Gamma5_2}). Then $\mu$ must have the same form and then we denote
	  $$\mu=\corchetes{\begin{array}{ccc}
	q & \mu_{12} & \mu_{13}\\
	0 & 1 & \mu_{23}\\
	0 & 0 & 1\\
	\end{array}},$$
	for some $q\in\Z$. Lemma \ref{obs_los_dos_calculos} (ii) states that $\corchetes{\gamma,\mu}\in\Gamma_p\setminus\set{\id}$ or $\corchetes{\gamma,\mu}=\id$. In either case, the entry 12 of $\corchetes{\gamma,\mu}$ satisfies
	\begin{equation}\label{eq_dem_lem_kmn4_combinaciones_Gamma5_2}
	\frac{\gamma_{12}(1-q)-\mu_{12}(1-p)}{pq} = 0.	
	\end{equation}	 
This means that 
	$$\fix\parentesis{\corchetes{\begin{array}{ccc}
	q & \mu_{12} \\
	0 & 1 \\
	\end{array}}}=\fix\parentesis{\corchetes{\begin{array}{ccc}
	p & \gamma_{12} \\
	0 & 1 \\
	\end{array}}},$$
and therefore, conjugating $\Gamma$ by $A=g_{-\gamma_{12}\parentesis{1-p}^{-1},0}$ and using (\ref{eq_dem_lem_kmn4_combinaciones_Gamma5_2}), we can assume that
	$$\gamma=\corchetes{\begin{array}{ccc}
	p & 0 & \gamma_{13}\\
	0 & 1 & \gamma_{23}\\
	0 & 0 & 1\\
	\end{array}}\text{, and }\mu=\corchetes{\begin{array}{ccc}
	q & 0 & \mu_{13}\\
	0 & 1 & \mu_{23}\\
	0 & 0 & 1\\
	\end{array}},$$
for some $p,q\in\Z\setminus\set{0}$. Then
	\begin{align}
	\text{If }\corchetes{\gamma,\mu}=\id &\Rightarrow \gamma_{13}(1-q)=\mu_{13}(1-q).\label{eq_dem_lem_kmn4_combinaciones_Gamma5_3} \\
	\text{If }\corchetes{\gamma,\mu}\neq\id &\Rightarrow \frac{\gamma_{13}{1-q}-\mu_{13}(1-p)}{pq}\in\Z. \nonumber
	\end{align}
    Since $\Gamma_p\rtimes\prodint{\gamma}$ is normal in $\Gamma$, then $\mu g \gamma \mu^{-1}=g^n\gamma^k$ for some $n,k\in\Z$. Comparing entries 23 of both expressions yields $k=1$, comparing entries 13 we have
	\begin{equation}\label{eq_dem_lem_kmn4_combinaciones_Gamma5_5}
	q(1+\gamma_{13})+\mu_{13}(1-p)=1+\gamma_{13}.
	\end{equation}
    If $\corchetes{\gamma,\mu}=\id$, (\ref{eq_dem_lem_kmn4_combinaciones_Gamma5_3}) yields $q=1$, which cannot happen. If $\corchetes{\gamma,\mu}=\id$, (\ref{eq_dem_lem_kmn4_combinaciones_Gamma5_3}) implies $\parentesis{\gamma_{13}{1-q}-\mu_{13}(1-p)}\parentesis{pq}^{-1}=j$, for some $j\in\Z$. Combining this equation with (\ref{eq_dem_lem_kmn4_combinaciones_Gamma5_5}), we have $q=\parentesis{1+jp}^{-1}\in\Z$, which cannot happen. Therefore there cannot be an element in $N_3\setminus\prodint{\gamma}$.

	\item The proof is similar to the proof of (i) above.
	
	\item If $\rank(N_3)=1$, one generator of $N_3$ is the element $\gamma$ given by (\ref{eq_lem_kmn4_combinaciones_Gamma5_2}). Let us assume that there is an element $\mu\in N_4$, given by
		$$\mu=\corchetes{\begin{array}{ccc}
	\alpha & \mu_{12} & \mu_{13}\\
	0 & \beta & \mu_{23}\\
	0 & 0 & \alpha^{-1}\beta^{-1}\\
	\end{array}},\;
	\begin{array}{l}
	\alpha,\beta\in\C^{\ast},\;\alpha\beta^{-2}\neq 1\\
	\mu\text{ is loxodromic.}	
	\end{array}		
	$$
	Since $\corchetes{\gamma,\mu}\in\Gamma_p$ and comparing entries 23, it follows $\gamma_{23}\alpha^{-1}\beta^{-2}(1-\alpha\beta^2)=0$. This means that, either $\gamma_{23}=0$ or $\alpha\beta^2=1$. Both conditions contradict the hypotheses, then $\Gamma$ cannot contain elements in $N_4$.
		
	\item If $\rank(N_4)=2$, let $\gamma=\corchetes{\gamma_{ij}}$ and $\mu$ be two generators of $N_4$, then $\gamma$ has the form given by (\ref{eq_lem_kmn4_combinaciones_Gamma5_0}), we can conjugate $\Gamma$ by an adequate element of $\PSL$ and assume that $\gamma_{12}=\gamma_{13}=\gamma_{23}=0$ in (\ref{eq_lem_kmn4_combinaciones_Gamma5_0}). Since $\corchetes{\mu,\gamma}\in\Gamma_p=\Gamma_5$, $\mu$ has the form
		$$\mu=\corchetes{\begin{array}{ccc}
	\beta & 0 & \mu_{13}\\
	0 & q\beta^{-2} & 0\\
	0 & 0 & q^{-1}\beta\\
	\end{array}},\;
	\begin{array}{l}
	\beta\in\C^\ast,\;q\in\Z\setminus\set{0}\\
	q^2\neq\beta^3,\;\mu\text{ is loxodromic.}
	\end{array}
	$$
    If $\corchetes{\mu,\gamma}=\id$, then $\Gamma$ would be abelian. Therefore, $\corchetes{\mu,\gamma}\neq\id$, comparing entries 13 of $\corchetes{\mu,\gamma}$ and $g$ yields $\mu_{13}p^{-1}\beta^{-1}(1-p)=j$, for some $j\in\Z\setminus\set{0}$. Then $\mu_{13}=jp\beta(1-p)^{-1}$, this verifies (\ref{eq_lem_kmn4_combinaciones_Gamma5_00}).
\end{enumerate}
\end{proof}

\subsection{The parabolic part is an Inuoe group}



Using the notation of (5) of Theorem \ref{thm_descripcion_parte_parabolica}, it holds $r\parentesis{\text{Ino}(\mathbf{w})}=3$. This dismisses all subcases 1mn(5) and 2mn(5). In the following lemma, we dismiss the remaining subcases 310(5) and 301(5).

\begin{lem}\label{lem_kmn5_combinaciones}
Let $\Gamma\subset U_+$ be such that $\Gamma_p$ is conjugate to $\text{Ino}(\mathbf{w})$, for some $\mathbf{w}$ as in the case (5) of Theorem \ref{thm_descripcion_parte_parabolica}. 
Then $\Gamma$ cannot contain loxodromic elements.
\end{lem}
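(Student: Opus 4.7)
The plan is to argue by contradiction: assume there is a loxodromic $\gamma\in\Gamma$ and, using the decomposition of Theorem \ref{thm_descomposicion_caso_noconmutativo}, split into the two possibilities $\gamma\in N_3$ and $\gamma\in N_4$. In both cases I will exploit the normality of $\Gamma_p$ (in particular, of $\core(\Gamma)$) in $\Gamma$, together with the discreteness of $\Gamma$, exactly as in the proof of Lemma \ref{lem_Gamma_RLW_no_tiene_lox}. Throughout, I will test the normality condition on the two ``extreme'' generators of $\Gamma_p=\Gamma_{\mathbf{w}}$, namely $g_{1,0}\in\core(\Gamma_p)$ and the element $h_3$ whose $(2,3)$-entry is $1$.

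\smallskip

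\textbf{Case $\gamma\in N_3$.} Up to conjugation $\gamma$ has diagonal $(\alpha^{-2},\alpha,\alpha)$ with $|\alpha|\neq 1$ and $\gamma_{23}\neq 0$. A direct matrix computation (analogous to the one in Lemma \ref{lem_Gamma_RLW_no_tiene_lox}) shows that
\[
\gamma^{k}\,g_{1,0}\,\gamma^{-k}=g_{\alpha^{-3k},\,-k\alpha^{-3k-1}\gamma_{23}},\qquad k\in\Z.
\]
These elements all lie in $\core(\Gamma)$. Choosing $k\to+\infty$ if $|\alpha|>1$, or $k\to-\infty$ if $|\alpha|<1$, both coordinates tend to $0$ (exponential decay beats linear growth), producing an infinite sequence of distinct core elements accumulating at the identity, contradicting discreteness of $\Gamma$.

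\smallskip

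\textbf{Case $\gamma\in N_4$.} Now $\gamma$ has diagonal $(\alpha,\beta,(\alpha\beta)^{-1})$ with $\alpha\beta^{2}\neq 1$. Two computations play the central role. First, for the core element $g_{1,0}$ one checks that $\gamma\,g_{1,0}\,\gamma^{-1}$ is again in the core with $(1,2)$-entry $\alpha/\beta$; iterating, $\gamma^{k}g_{1,0}\gamma^{-k}$ has $(1,2)$-entry $(\alpha/\beta)^{k}$. Second, conjugating $h_3$ yields
\[
\gamma\,h_3\,\gamma^{-1}=\begin{pmatrix}
1 & \alpha x/\beta & \ast\\
0 & 1 & \alpha\beta^{2}\\
0 & 0 & 1
\end{pmatrix},
\]
so the $(2,3)$-entry becomes $\alpha\beta^{2}$. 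Since the generators $h_1,h_2,h_3$ of $\Gamma_p$ have integer $(2,3)$-entries and this entry is additive under multiplication, every element of $\Gamma_p$ has $(2,3)$-entry in $\Z$. Hence $(\alpha\beta^{2})^{k}\in\Z$ for every $k\in\Z$, which forces $\alpha\beta^{2}\in\{\pm 1\}$; excluding $\alpha\beta^{2}=1$ leaves $\alpha\beta^{2}=-1$, in particular $|\alpha\beta^{2}|=1$. If also $|\alpha/\beta|=1$, then combining $|\alpha|=|\beta|$ with $|\alpha|\,|\beta|^{2}=1$ gives $|\alpha|=|\beta|=|(\alpha\beta)^{-1}|=1$, contradicting that $\gamma$ is loxodromic. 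Therefore $|\alpha/\beta|\neq 1$, and the sequence $\gamma^{k}g_{1,0}\gamma^{-k}$ has $(1,2)$-entry $(\alpha/\beta)^{k}$ accumulating at $0$; a small additional check on the $(1,3)$-entry (solving the linear recursion $T_{k+1}=\alpha^{2}\beta T_{k}-\alpha^{2}(\alpha/\beta)^{k}\gamma_{23}$) shows the whole sequence accumulates at the identity in the core, contradicting discreteness.

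\smallskip

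The main obstacle is the $N_4$ case: one cannot get a contradiction from a single conjugation identity, since the loxodromic condition $\alpha\beta^{2}\neq 1$ is compatible with either $|\alpha/\beta|=1$ or $|\alpha\beta^{2}|=1$ individually. The key trick is to use \emph{both} generating invariants ($g_{1,0}$ controlling $|\alpha/\beta|$ and $h_3$ controlling $|\alpha\beta^{2}|$) and observe that having both quantities equal to $1$ forces $\gamma$ onto the compact subgroup of diagonals with all eigenvalues of norm $1$.
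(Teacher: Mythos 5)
Your proof is correct, but it follows a genuinely different route from the paper's. The paper argues arithmetically: it first conjugates $\Gamma_{\mathbf{w}}$ (using $d\neq 0$ and Proposition \ref{prop_casos_kmn4_conjugaciones}) so that two generators become $g_{1,0},g_{0,1}$, invokes Lemma \ref{lem_kmn4_combinaciones_Gamma1} to force any loxodromic $\gamma$ into an integral normal form, and then kills the $N_3$ case by comparing entries of $\gamma g_3\gamma^{-1}=g_1^kg_2^ng_3^m$ (getting $c^2q=0$) and the $N_4$ case by deducing $p=\pm 1$, which is excluded by Corollary \ref{cor_HC3_no_hay_en_no_conmutativos} and torsion-freeness. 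You instead argue dynamically, in the spirit of Lemma \ref{lem_casos_kmn2_noexisten}: you exhibit explicit sequences $\gamma^k g_{1,0}\gamma^{-k}$ of distinct elements converging to the identity, contradicting discreteness. Your computations check out: in the $N_3$ case $\gamma^k g_{1,0}\gamma^{-k}=g_{\alpha^{-3k},-k\alpha^{-3k-1}\gamma_{23}}$ does accumulate at $\id$; in the $N_4$ case the $(2,3)$-entries of $\Gamma_p$ form exactly $\Z$, so normality forces $\alpha\beta^2\Z=\Z$, hence $|\alpha\beta^2|=1$, which together with loxodromy forces $|\alpha/\beta|\neq 1$, and the solution $y_k=A\lambda^k(1-u^k)$ of your recursion (with $|u|=1$, $\lambda=\alpha/\beta$) indeed decays along with $x_k=\lambda^k$. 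What your approach buys is economy of hypotheses: you do not need Proposition \ref{prop_casos_kmn4_conjugaciones}, Lemma \ref{lem_kmn4_combinaciones_Gamma1}, Corollary \ref{cor_HC3_no_hay_en_no_conmutativos}, torsion-freeness, or even the full force of $\rank(\Gamma_{\mathbf{w}})=3$ --- only normality of $\Gamma_p$, the integrality of its $(2,3)$-entries, and discreteness. What the paper's route buys is that the integral normal forms it extracts are the same ones reused throughout Section \ref{sec_representation}. One small point worth making explicit in your write-up: a loxodromic element with $\beta=\delta$ could a priori have $\gamma_{23}=0$ (a type I complex homothety rather than a third-layer loxoparabolic); your $N_3$ computation still applies verbatim there (with $y_k\equiv 0$), so nothing is lost, but you should say so rather than assume $\gamma_{23}\neq 0$ as part of the case.
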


\begin{proof}
We can assume that $\Gamma_p=\text{Ino}(\mathbf{w})$. We denote the 3 generators of $\Gamma_p$ by $g_1,g_2,g_3$. Since $d\neq 0$, it follows from Proposition \ref{prop_casos_kmn4_conjugaciones} that $\text{Ino}(\mathbf{w})$ is conjugate to 
	$$
	G=\SET{g_{1,0}^k\, g_{0,1}^l\corchetes{\begin{array}{ccc}
	1 & x-\frac{ps}{q}y & sy\\
	0 & 1-\frac{ps}{q} & s\\
	0 & -\frac{ps^2}{q} & 1+\frac{ps}{q}\\
	\end{array}}^m}{k,l,m\in \Z}.$$
We consider these generators of $\Gamma_p$ instead, we still denoted them by $g_1,g_2,g_3$.

Let us suppose first that there is an element $\gamma\in N_3$. Observe that $\prodint{g_1,g_2}$ is normal in $\Gamma$ and therefore $\gamma g_1 g_2 \gamma^{-1}\in\prodint{g_1,g_2}$, Lemma \ref{lem_kmn4_combinaciones_Gamma1} yields
	$$\gamma=\corchetes{\begin{array}{ccc}
	p & \gamma_{12} & \gamma_{13}\\
	0 & 1 & \frac{q}{p}\\
	0 & 0 & 1\\
	\end{array}},\;
	\begin{array}{l}
	p\in\Z\setminus\set{-1,0,1}\\
	q\in\Z\setminus\set{0}.
	\end{array}
	$$
Since $\Gamma_p$ is normal in $\Gamma_p\rtimes\prodint{\gamma}$ then $\gamma g_3 \gamma^{-1}=g_1^k g_2^n g_3^m$ for some $k,n,m\in\Z$. Comparing entries 32 yields $m=1$, substituting in the equation resulting of comparing entries 22 yields $q=0$, which is a contradiction.
	
Now, assume that there is a loxodromic element $\gamma\in N_4$. Using the same argument as in the case of an element in the third layer we know that 
	$$\gamma=\corchetes{\begin{array}{ccc}
	ab & \gamma_{12} & \gamma_{13}\\
	0 & a & c\\
	0 & 0 & b\\
	\end{array}},\;
	\begin{array}{l}
	a,b,c\in\Z\\
	p,q\neq 0.
	\end{array}
	$$
Considering the entry 12 of $\corchetes{\gamma,g_1}\in\prodint{g_1,g_2}$ yields $1-\frac{1}{b}\in\Z$, then $b=\pm 1$. If $b=1$ then $\gamma$ is a complex homothety, which cannot happen by Proposition \ref{cor_no_hay_CH_en_no_conmutativos}. If $b=-1$ then $\lambda_{12}(\gamma)$ is a torsion element in the torsion free group $\lambda_{12}(\Gamma)$. This concludes the proof.
\end{proof}

\subsection{The parabolic part is an extended Kodaira group}

Observe that $3\leq r(\text{Kod}_1(W,x,y,z)) \leq 4$ depending on whether $r(W)=1$ or $r(W)=2$ respectively. If $\Gamma\subset U_+$ is a discrete groups with parabolic part conjugated to a non-abelian Kodaira group of type 1 $\text{Kod}_1(W,x,y,z)$ such that $r(\text{Kod}_1(W,x,y,z))=4$, then $\Gamma$ cannot contain any loxodromic elements (Theorem \ref{thm_descomposicion_caso_noconmutativo2}). Because of this, we can only consider non-abelian Kodaira group of type 1, $\text{Kod}_1(W,x,y,z)$, such that $r(W)=1$. On the other hand, non-abelian Kodaira group of type 2, $\text{Kod}_2(W,x,y,z)$, always satisfy that $r(W)=1$. 

The last paragraph implies that subcases 1mn(6) and 2mn(6) cannot happen. Therefore, we just need to describe subcases 310(6) and 301(6), but these cases cannot happen either, as we prove in the following lemma.

\begin{lem}\label{lem_kmn6_combinaciones}
If $\Gamma$ is such that $\Gamma_p$ is conjugate to a non-abelian Kodaira group of type 1 or 2 $\text{Kod}_i(W,x,y,z)$, for some $W,x,y,z$ as in Theorem \ref{thm_descripcion_parte_parabolica}(6), then $\Gamma$ cannot contain loxodromic elements.
\end{lem} 

\begin{proof}
Assume that $\Gamma_p=\text{Kod}_i(W,x,y,z)$ for $i=1,2$, with $W,x,y,z$ as in the hypothesis of the lemma. Denote by $\phi_1,\phi_2,\phi_3$ the respective generators of $\Gamma_p$ given in Theorem \ref{thm_descripcion_parte_parabolica}(6). 

Let us assume first that there is a loxodromic element $\gamma\in N_3$ in the third layer of $\Gamma$, then
	\begin{equation}\label{eq_dem_lem_kmn6_combinaciones1}
	\gamma=\corchetes{\begin{array}{ccc}
	\alpha^{-2} & \gamma_{12} & \gamma_{13}\\
	0 & \alpha & \gamma_{23}\\
	0 & 0 & \alpha\\
	\end{array}}
	\end{equation}		
for some $\alpha\in\C^\ast$ such that $\valorabs{\alpha}\neq 1$ and $\gamma_{23}\neq 0$. A direct calculation shows that
	\begin{equation}\label{eq_dem_lem_kmn6_combinaciones2}
	\Gamma_p = \SET{\corchetes{\begin{array}{ccc}
	1 & mn(x+z) & \psi_{k,m,n}\\
	0 & 1 & mnz\\
	0 & 0 & 1\\
	\end{array}}}{k,m,n\in\Z}.
	\end{equation}
The explicit expression for $\psi_{k,m,n}$ is not relevant to the proof. Using the normality of $\Gamma_p$ in $\Gamma$ and comparing the entries 23 of $\gamma \phi_1 \phi_2 \phi_3 \gamma^{-1}\in \Gamma_p$ and (\ref{eq_dem_lem_kmn6_combinaciones2}), we get that $z+1=mnz$ for some $m,n\in\Z$. Since $z\neq 0$, it follows that $1+z^{-1}\in\Z$, and then $z\in\Q$. This contradicts the conditions for $z$ in non-abelian Kodaira groups of type 1 and 2. Thus, $\Gamma$ cannot contain elements in its third layer.

Now, assume that $\gamma\in N_4$ is a loxodromic element in the fourth layer, we denote
	$$\gamma=\corchetes{\begin{array}{ccc}
	\alpha & \gamma_{12} & \gamma_{13}\\
	0 & \beta & \gamma_{23}\\
	0 & 0 & \alpha^{-1}\beta^{-1}\\
	\end{array}},\;\alpha,\beta\in\C^\ast.$$	 
Since $\Gamma_p$ is normal in $\Gamma$, and comparing entries 12 and 23 of $\gamma \phi_1 \phi_3 \gamma^{-1}\in \Gamma_p$ and (\ref{eq_dem_lem_kmn6_combinaciones2}), we get
	\begin{equation}\label{eq_dem_lem_kmn6_combinaciones4}
	\alpha\beta^{-1} = mn,\text{ and }\;\; \alpha\beta^2 = mn 
	\end{equation}
for some $m,n\in\Z$. From (\ref{eq_dem_lem_kmn6_combinaciones4}) it follows that $\beta^3=1$, and $\alpha=p\beta$ for some $p\in\Z$. Therefore,
	\begin{equation}\label{eq_dem_lem_kmn6_combinaciones6}
	\gamma=\corchetes{\begin{array}{ccc}
	p\beta & \gamma_{12} & \gamma_{13}\\
	0 & \beta & \gamma_{23}\\
	0 & 0 & p^{-1}\beta^{-2}\\
	\end{array}},\;\beta^3=1.
	\end{equation}
Since $\prodint{\phi_1}$ is a normal subgroup of $\Gamma$, it follows that $\gamma \phi_1 \gamma^{-1}\in \prodint{\phi_1}$. Since $r(W)=1$, Lemma \ref{lem_kmn4_combinaciones_Gamma5}(3) implies that 
	\begin{equation}\label{eq_dem_lem_kmn6_combinaciones7}
	\gamma=\corchetes{\begin{array}{ccc}
	\alpha & \gamma_{12} & \gamma_{13}\\
	0 & q\alpha^{-2} & \gamma_{23}\\
	0 & 0 & q^{-1}\alpha^{-1}\\
	\end{array}},\;
	\begin{array}{l}
	q\in\Z\setminus\set{0},\;\alpha\in\C^\ast\\
	\gamma\text{ is loxodromic.}
	\end{array}
	\end{equation}	
Comparing the diagonal entries of both (\ref{eq_dem_lem_kmn6_combinaciones6}) and (\ref{eq_dem_lem_kmn6_combinaciones7}), and since $\beta^3=1$ it follows that $p=\pm 1$, $q=1$, $\alpha=p$ and $\beta=1$. Then $\gamma$ is either parabolic or induces a torsion element in $\lambda_{12}(\Gamma)$, both possibilities cannot occur. This finishes the proof.
\end{proof}

\section{Proofs of the Main Theorems}\label{sec:proofs}

In this section, we prove Theorems \ref{thm:main_1}, \ref{thm:main_2}, \ref{thm:main_3} and \ref{thm:main_4}. In all proofs, $\Gamma\subset U_+$ is a non-cyclic, upper triangular discrete group containing loxodromic elements. We assume, without loss of generality, that the groups $\Gamma$, $\lambda_{12}(\Gamma)$, $\lambda_{23}(\Gamma)$ are torsion free (Proposition \ref{prop:assume_torsion_free}). 

\begin{proof}[Proof of Theorem \ref{thm:main_1}]
Suppose that $\Gamma$ is abelian. If $\Gamma_p=\set{\id}$, then $\Gamma$ contains only loxodromic elements and therefore contains only diagonal elements. Propositions 24 and 25 of \cite{toledo2021dynamics} verify (1). 

If $\Gamma_p\neq \set{\id}$, Theorem \ref{thm_descripcion_parte_parabolica} and Lemma \ref{lem:elliptic_abelian} imply that $\Gamma_p$ is conjugate to an elliptic group. Since $\Gamma$ contains loxodromic elements, Propositions 20 and 23 of \cite{toledo2021dynamics} yield (2).
\end{proof}

\begin{proof}[Proof of Theorem \ref{thm:main_2}]
If $\Gamma$ is not abelian and its parabolic part $\Gamma_p$ is conjugate to a torus group $\mathcal{T}(W)$, then Lemma \ref{lem_casos_kmn2_unico} yields the conclusion. Lemmas \ref{lem_casos_kmn2_noexisten} and Lemma \ref{lem_casos_kmn2_noexisten_2} prove that this is the only possible case.
\end{proof}

\begin{proof}[Proof of Theorem \ref{thm:main_3}]
If $\Gamma$ is not abelian and its parabolic part $\Gamma_p$ is conjugate to a dual torus group $\mathcal{T}^\ast(W)$ with $r(W)=2$, then $\Gamma_p$ is conjugate to either $\Gamma_1$, $\Gamma_2$ or $\Gamma_3$ as given by Proposition \ref{prop_casos_kmn4_conjugaciones} (i),(ii),(iii). 

If $\Gamma_p\cong\Gamma_1$, Lemma \ref{lem_kmn4_combinaciones_Gamma1} verifies conclusions (1), (2) and (3). If $\Gamma_p\cong\Gamma_2$, Lemma \ref{lem_kmn4_combinaciones_Gamma2} verifies conclusions (4) and (5). If $\Gamma_p\cong\Gamma_3$, Lemma \ref{lem_kmn4_combinaciones_Gamma3} verifies conclusions (6) and (7).

In all cases, the same lemmas prove that there are no other possible cases.
\end{proof}

\begin{proof}[Proof of Theorem \ref{thm:main_4}]
If $\Gamma$ is not abelian and its parabolic part $\Gamma_p$ is conjugate to a dual torus group $\mathcal{T}^\ast(W)$ with $r(W)=1$, then $\Gamma_p$ is conjugate to either $\Gamma_4$, $\Gamma_5$ as given by Proposition \ref{prop_casos_kmn4_conjugaciones} (iv),(v). 

If $\Gamma_p\cong\Gamma_4$, Lemma \ref{lem_kmn4_combinaciones_Gamma4} verifies conclusions (1), (2) and (3). If $\Gamma_p\cong\Gamma_5$, Lemma \ref{lem_kmn4_combinaciones_Gamma5} verifies conclusions (4), (5) and (6).

In all cases, the same lemmas prove that there are no other possible cases.
\end{proof}

Using Lemmas \ref{lem_Gamma_RLW_no_tiene_lox}, \ref{lem_kmn5_combinaciones}, \ref{lem_kmn6_combinaciones} we conclude that there are no upper triangular, non-cyclic, discrete groups $\Gamma\subset U_+$ containing loxodromic elements such that $\Gamma_p$ is conjugated to an abelian Kodaira group, an Inoue group, or an extended Kodaira group of type 1 or 2. This verifies that the four theorems \ref{thm:main_1}, \ref{thm:main_2}, \ref{thm:main_3} and \ref{thm:main_4} describe, up to conjugation, all upper triangular, non-cyclic, discrete groups containing loxodromic elements.

\section*{Acknowledgments}
The author would like to thank to the UCIM UNAM, CIMAT, FAMAT UADY, CINC UAEM and their people for their hospitality and kindness during the writing of this paper. The author particularly thanks Angel Cano for the helpful conversations.

\bibliographystyle{alpha}
\bibliography{BibliografiaCKG}

\begin{thebibliography}{BGUN18}

\bibitem[BCN11]{bcn11cuatrolineas}
W.~Barrera, A.~Cano, and J.~P. Navarrete.
\newblock Subgroups of $\text{PSL}(3,\mathbb{C})$ with four lines in general
  position.
\newblock {\em Conformal Geometry and Dynamics}, 15:160--176, 2011.

\bibitem[BCN14]{bcn14unalinea}
W.~Barrera, A.~Cano, and J.~P. Navarrete.
\newblock One line complex {Kleinian} groups.
\newblock {\em Pacific Journal of Mathematics}, 272(2):275--303, 2014.

\bibitem[BCN16]{bcn16}
W.~Barrera, A.~Cano, and J.~P. Navarrete.
\newblock On the number of lines in the limit set for discrete subgroups of
  $\text{PSL}(3,\mathbb{\C})$.
\newblock {\em Pacific Journal of Mathematics}, 281(1):17--49, 2016.

\bibitem[BCNS22]{barrera2022}
Waldemar Barrera, Angel Cano, Juan~Pablo Navarrete, and José Seade.
\newblock Discrete parabolic groups in {PSL(3,C)}.
\newblock {\em Linear Algebra and its Applications}, 653:430--500, 2022.

\bibitem[BGUN18]{bgn18}
W.~Barrera, A.~Gonzalez-Urquiza, and J.~P. Navarrete.
\newblock Duality of the {Kulkarni} limit set for subgroups of
  $\text{PSL}(3,\mathbb{\C})$.
\newblock {\em Bulletin of the Brazilian Mathematical Society}, 49:261--277,
  2018.

\bibitem[CNS13]{ckg_libro}
A.~Cano, J.~P. Navarrete, and J.~Seade.
\newblock {\em Complex {Kleinian} Groups}.
\newblock Progress in Mathematics 303. Springer Basel, 2013.

\bibitem[CS14]{cs2014}
A.~Cano and J.~Seade.
\newblock On discrete groups of automorphisms of $\mathbb{P}_{\mathbb{c}}^2$.
\newblock {\em Geometriae Dedicata, Springer}, 168:9--60, 2014.

\bibitem[Kod64]{kodaira1964structure}
Kunihiko Kodaira.
\newblock On the structure of compact complex analytic surfaces, i.
\newblock {\em American Journal of Mathematics}, 86(4):751--798, 1964.

\bibitem[Mas87]{maskit}
B.~Maskit.
\newblock {\em Kleinian Groups}.
\newblock Grundlehren der matematischen Wissenschaften 287. Springer Verlag,
  1987.

\bibitem[MT98]{taniguchi}
K.~Matsuzaki and M.~Taniguchi.
\newblock {\em Hyperbolic Manifolds and {Kleinian} Groups}.
\newblock Clarendon Press, 1998.

\bibitem[SV01]{seade2001actions}
Jos{\'e} Seade and Alberto Verjovsky.
\newblock Actions of discrete groups on complex projective spaces.
\newblock {\em Contemporary Mathematics}, 269:155--178, 2001.

\bibitem[TA19]{mitesis}
G.~M. Toledo-Acosta.
\newblock Three generalizations regarding limit sets for complex kleinian
  groups.
\newblock {\em Centro de Investigaci\'on en Matem\'aticas, {PHD} Thesis. arXiv
  preprint arXiv:1912.02369}, 2019.

\bibitem[TA21]{toledo2021dynamics}
Mauricio Toledo-Acosta.
\newblock The dynamics of solvable subgroups of $\text{PSL}(3,\mathbb{C})$.
\newblock {\em Bulletin of the Brazilian Mathematical Society, New Series},
  pages 1--45, 2021.

\bibitem[Weh73]{wehrfritz2012infinite}
Bertram Wehrfritz.
\newblock {\em Infinite linear groups: an account of the group-theoretic
  properties of infinite groups of matrices}.
\newblock Springer-Verlag Berlin Heidelberg, 1973.

\end{thebibliography}

\end{document}